\newtheorem{Theorem}{Theorem}[section]
\newtheorem{Corollary}[Theorem]{Corollary}
\newtheorem{Proposition}[Theorem]{Proposition}
\newtheorem{Lemma}[Theorem]{Lemma}
\newtheorem{Res*}{}
\theoremstyle{definition}
\newtheorem{Definition}{Definition}[section]
\newtheorem{Remark}{Remark}
\DeclareMathAlphabet{\mathcalligra}{T1}{calligra}{a}{c}
\newcommand{\Cc}{\kern-4pt\mathcalligra{C\kern0.2pt a\kern0.2pt c\kern0.2pt c}\kern1pt}
\numberwithin{equation}{section}
\newcommand\ttimes{\cone}
\newcommand\res{\mathop{\hbox{\vrule height 7pt width .3pt depth 0pt
\vrule height .3pt width 5pt depth 0pt}}\nolimits}
\newcommand{\cH}{{\mathcal{H}}}
\newcommand{\bB}{{\mathbf{B}}}
\newcommand\Z{{\mathbb Z}}
\newcommand{\eps}{{\varepsilon}}
\def\XXint#1#2#3{{\setbox0=\hbox{$#1{#2#3}{\int}$ }
\vcenter{\hbox{$#2#3$ }}\kern-.6\wd0}}
\newcommand{\cone}{{\times\hspace{-0.22cm}\times}}
\newcommand{\dist}{{\rm {dist}}}
\def\a#1{\left\llbracket{#1}\right\rrbracket}
\title{Uniqueness of boundary tangent cones for $2$-dimensional area-minimizing currents}
\author{Camillo De Lellis, Stefano Nardulli, Simone Steinbr\"uchel}
\date{}
\begin{document}

\maketitle

\begin{abstract}
In this paper we show that, if $T$ is an area-minimizing $2$-dimensional integral current with $\partial T = Q \a{\Gamma}$, where $\Gamma$ is a $C^{1,\alpha}$  curve for $\alpha>0$ and $Q$ an arbitrary integer, then $T$ has a unique tangent cone at every boundary point, with a polynomial convergence rate. The proof is a simple reduction to the case $Q=1$, studied by Hirsch and Marini in \cite{HM}.
\end{abstract}

\section{Introduction}\label{s:1}

The main goal of this note is to prove the following theorem, we refer to \cite{Federer,Simon} for the relevant notation and definitions.

\begin{Theorem}\label{t:main}
Let $T$ be a $2$-dimensional area-minimizing integral current $T$ in some open set $U\subset \mathbb R^2$ and assume that $\partial T = Q \a{\Gamma}$ for some integer $Q$ and some $C^{1,\alpha}$ embedded simple curve with $\alpha >0$. Then at every point $x\in \Gamma$ there is a unique tangent cone $C$, that is, if $\iota_{x, r}: \mathbb{R}^{m+n} \rightarrow \mathbb{R}^{m+n}$ is the map $z \mapsto \frac{z-x}{r}$, then 
$T_{x,r}:= (\iota_{x,r})_\sharp T$ converges, as $r\downarrow 0$, to $C$. 
\end{Theorem}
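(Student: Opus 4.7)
The overall strategy, as announced in the abstract, is to reduce the statement to the case $Q=1$ proved by Hirsch--Marini in \cite{HM}. The mechanism is a \emph{local splitting} of $T$, in a small neighborhood of each $x \in \Gamma$, into a sum of $Q$ area-minimizing currents each with boundary $\a{\Gamma}$ of multiplicity one. Hirsch--Marini then produces unique tangent cones with polynomial rates for each summand, and the sum of these cones serves as the unique tangent cone of $T$.

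To set the stage, the boundary monotonicity formula and standard compactness ensure that at every $x \in \Gamma$ subsequential tangent cones exist and are $2$-dimensional area-minimizing cones with boundary $Q\a{\ell_x}$, where $\ell_x$ is the tangent line to $\Gamma$ at $x$. Slicing perpendicular to $\ell_x$ reduces the cone classification to that of $1$-dimensional area-minimizing cones with boundary $Q\a{0}$, so that every such cone has the form $\sum_{i=1}^{N} m_i \a{H_i}$ with the $H_i$ half-planes bounded by $\ell_x$ and $\sum_i m_i = Q$. Invoking the $2$-dimensional boundary regularity theory available in this setting (in particular, from earlier work of the authors and their collaborators), one obtains in a sufficiently small ball $B_r(x)$ a decomposition
\begin{equation*}
T \res B_r(x) = \sum_{j=1}^{Q} T_j,
\end{equation*}
where each $T_j$ is a $2$-dimensional area-minimizing integral current satisfying $\partial T_j \res B_r(x) = \a{\Gamma} \res B_r(x)$. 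This is the core technical step: in dimension $2$ the structure of a minimizer near a $C^{1,\alpha}$ boundary is rigid enough that the $Q$ sheets hitting $\Gamma$ can be peeled off one at a time, with each peeling preserving the area-minimizing property.

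Once the splitting is in place, \cite{HM} applied to each $T_j$ yields a unique tangent cone $C_j$ at $x$ with a polynomial convergence rate. Linearity of the push-forward gives
\begin{equation*}
T_{x,r} = \sum_{j=1}^{Q} (T_j)_{x,r} \longrightarrow C := \sum_{j=1}^{Q} C_j \quad \text{as } r \downarrow 0,
\end{equation*}
with polynomial rate governed by the slowest of the individual rates, and uniqueness of $C$ is then immediate. The main and essentially only obstacle is the local decomposition into multiplicity-one pieces; after that, the remaining steps are formal and justify the description of the argument as a \emph{simple reduction}.
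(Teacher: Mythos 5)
The step you call ``the core technical step'' is a genuine gap, and it is not how the paper's reduction to $Q=1$ works. You assert that in a small ball $B_r(x)$ one can write $T\res B_r(x)=\sum_{j=1}^Q T_j$ with each $T_j$ area minimizing and $\partial T_j=\a{\Gamma}\res B_r(x)$, invoking ``boundary regularity theory available in this setting''. No such decomposition is available: the paper states explicitly that for $Q>1$ there are no boundary regularity results beyond the trivial $1$-dimensional case (this is White's open problem), and the companion paper \cite{DNS}, which does prove boundary regularity under extra convexity hypotheses, \emph{uses} the present theorem as a stepping stone. Peeling off $Q$ multiplicity-one sheets along $\Gamma$ presupposes exactly the kind of structural information at the boundary that uniqueness of tangent cones is meant to help establish, so the argument is circular. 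It is also unclear that a decomposition of the claimed form can exist: by the classification of boundary tangent cones (Proposition \ref{p:classification}), the cone may contain a closed part $S^{\rm int}$ with no boundary at all, and a ``closed book'' part $Q^+\a{V^+}+Q^-\a{V^-}$ with $Q^+-Q^-=Q$ and $Q^+,Q^->0$; your slicing argument only yields $\sum_i m_i=Q$ with the $m_i$ of unrestricted sign, not $Q$ pieces each carrying boundary multiplicity exactly $+1$, and a splitting of $T$ compatible with such cones is not addressed.

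The paper's actual reduction to $Q=1$ is performed not on the $2$-dimensional current $T$ but on \emph{one-dimensional} objects: the cross-section $R=\partial(S\res\bB_1)-Q\a{\ell}\res\bB_1$ of an area-minimizing cone and the competitor $Z$ in the epiperimetric inequality. For $1$-dimensional integral currents Federer's decomposition theorem provides an honest splitting into indecomposable Lipschitz curves; Lemma \ref{l:decomposition} upgrades this to a decomposition of $Z$ into $Q$ pieces, each close to the cross-section of a multiplicity-one cone in $\mathscr{C}(1,\bar Q,n)$, plus a small closed remainder handled by the isoperimetric inequality. The Hirsch--Marini $(\varepsilon,\delta)$-epiperimetric property for $Q=1$ is then applied to each piece, yielding the epiperimetric inequality for general $Q$ (Proposition \ref{p:epiperimetric}), which feeds into the standard White/Hirsch--Marini excess-decay scheme to give Theorem \ref{t:main-preciso} and hence Theorem \ref{t:main}. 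This is what ``simple reduction to $Q=1$'' means in the abstract; it does not refer to a splitting of $T$ itself, and your proof would need to be restructured along these lines to close the gap.
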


A corresponding interior theorem was proved by White almost 40 years ago in his groundbreaking paper \cite{White}. White's approach has been extended to other cases in the interior, see e.g. \cite{Chang,PuRi}, while a comprehensive generalization to a rather robust notion of almost minimizer (which covers all known situations) has been given in \cite{DSS}. 

Concerning the boundary case, for $Q=1$ Theorem \ref{t:main} has been shown recently by Hirsch and Marini in \cite{HM}, still building on White's seminal approach. Our proof deviates only slightly from the one of \cite{HM}, however our interest in Theorem \ref{t:main} comes from the more general problem of proving regularity at the boundary for area-minimizing currents in higher codimension, when the multiplicity $Q$ is higher than $1$. The latter problem was raised by White in \cite{Collection} and there are no available results thus far, excluding the trivial case of $1$-dimensional currents. Theorem \ref{t:main} is a stepping stone for a subsequent work of us, \cite{DNS}, where we give a first result on White's problem: in \cite{DNS} we prove full regularity at the boundary for $2$-dimensional area-minimizing currents, under the assumption that $\Gamma$ is sufficiently smooth and is contained in the boundary of a sufficiently smooth uniformly convex set (more specifically $C^{3,\alpha}$ regularity of both suffices). Thus \cite{DNS} partially generalizes a theorem of Allard, who in his seminal paper \cite{AllB} proved full regularity under the above convexity assumption when $Q=1$, in any dimension and codimension. We will dedicate the next section to give a more general and more precise statement than the one above, where the assumption is relaxed to a suitable form of almost minimality and the convergence rate of $T_{x,r}$ to $C$ is shown to be polynomial. While the first point is less relevant for the purpose of \cite{DNS}, the second plays indeed a fundamental role. 

\subsection{Acknowledgements}
The first author acknowledges the support from the National Science Foundation through  the  grant  FRG-1854147.
The second author would like to thank Fapesp for financial support via the grant ``Bolsa de Pesquisa no Exterior'' number 2018/22938-4. 

\section{Almost minimality and polynomial rate of convergence}

Before stating our main theorem we establish some notation. First of all we refer the reader to \cite{Federer,Simon} for the notation and basic terminology in the theory of integral currents. We will use the short hand notation $T_{x, r}$ for $\left(\iota_{x, r}\right)_{\sharp} T$ (and drop $x$ if it is the origin) and, for a given $C^1$ non selfintersecting curve, we will denote by $T_x\Gamma$ its tangent line at $x$.

The exact definition of almost minimality which we will be used in the rest of the notes is as follows. 

\begin{Definition}[{\cite[Definition 1.1]{HM}}]\label{d:1.1} 
Given three real numbers $\Lambda \ge0, r_{0}\in ]0,1], \alpha_{0}>0$, we say that an $m$-dimensional integral current $T$ 
is $(\Lambda, r_{0}, \alpha_{0})$-\textbf{\emph{almost $($area$)$ minimizing at $x \in \operatorname{spt}(T)$}}, if we have 
\begin{align}\label{e:1.1}
 \|T\|\left(\bB_{r}(x)\right) \leq\left(1+\Lambda r^{\alpha_{0}}\right)\|T+\partial\tilde{T}\|\left(\bB_{r}(x)\right),
\end{align}
for all $0<r<r_{0}$ and all integral $(m+1)$-dimensional currents $\tilde{T}$ supported in $\bB_{r}(x)$. 
\end{Definition}

The convergence of integral currents will be measured using the flat distance between $T, S \in \mathbf{I}_{m}\left(\bB_{R+1}\right)$, (here we use the definition of \cite[Section 6.7]{Simon} which is different from Federer's definition, cf. \cite{Federer}):
\begin{align}\label{e:1.2}
  \mathbf{d}_{\bB_{R}}(T, S)=\inf \left\{\|R\|\left(\bB_{R}\right)+\|Q\|\left(\bB_{R}\right): T-S=R+\partial Q \text { in } \bB_{R+1}, \right\},
\end{align}
where the infimum is taken over $R \in \mathbf{I}_{m}\left(\bB_{R+1}\right)$ and $Q \in \mathbf{I}_{m+1}\left(\bB_{R+1}\right)$.
We also use ${\rm dist}_H$ to denote the Hausdorff distance between closed sets and we denote by $e (p, r)$ the usual ``spherical excess'' of a current $T$, namely
\begin{equation}\label{e:excess}
e (p,r) := \frac{\|T\| (\bB_r (p))}{\pi r^2} - \Theta (T, p)\, ,
\end{equation}
where 
\[
\Theta (T,p) := \lim_{r\downarrow 0} \frac{\|T\| (\bB_r (p))}{\pi r^2}
\]
(the latter limit will be shown to exist in the next section). Finally, we measure the H\"older regularity of the curve $\Gamma$ with a standard H\"older seminorm, 
\[
[\Gamma]_{0,\alpha, U} := \sup_{q\neq p\in \Gamma \cap U} \frac{|T_p \Gamma - T_q \Gamma|}{|p-q|^\alpha}\, .
\]

\begin{Theorem}\label{t:main-preciso}
There are constants $C$, $\varepsilon_0$, and $\beta>0$ depending only on $n, \alpha$, and $\Theta_0$ with the following property.  
Assume that:
\begin{itemize}
\item[(a)] $\Gamma$ is a $C^{1, \alpha}$ non self-intersecting curve in $\bB_r (p)$ with $p\in \Gamma$, $r\leq 1$, and $r^\alpha [\Gamma]_{0, \alpha, \bB_r (p)} \leq \varepsilon_0$;
\item[(b)] $T$ is a two-dimensional integral current in $\bB_r (p) \subset \mathbb{R}^{2+n}$ with $\partial T=Q\a{\Gamma}$;
\item[(c)]  $T$ is $(\Lambda, r, \alpha)$-almost minimizing at $p$ and $\Lambda r^\alpha \leq \varepsilon_0$;
\item[(d)] $\Theta (T, p) = \Theta_0$ and $e(p,r) \leq \varepsilon_0^2$.
\end{itemize}
Then there exists a unique area minimizing cone $S$ such that $\partial S = Q \a{T_0 \Gamma}$ and moreover for every $0<\rho \leq r$ we have
\begin{align}
|e (p, \rho)| &\leq C |e (p,r)| \left(\frac{\rho}{r}\right)^{2\beta} + C (\Lambda^2 + [\Gamma]^2_{0, \alpha, \bB_r (p)}) \left(\frac{\rho}{r}\right)^{2\beta}\label{e:decay-excess}\\
\mathbf{d}_{\bB_1 (0)} (T_{p,\rho},S) &\leq C |e(p,r)|^{\frac{1}{2}}  \left(\frac{\rho}{r}\right)^{\beta} + C (\Lambda + [\Gamma]_{0, \alpha, \bB_r (p)}) \left(\frac{\rho}{r}\right)^{\beta}\\
\dist_H ({\rm spt}\, (T_{p, \rho}) \cap \bar\bB_1, {\rm spt}\, (S) \cap \bB_1) &\leq C |e(p,r)|^{\frac{1}{2}}  \left(\frac{\rho}{r}\right)^{\beta} + C (\Lambda + [\Gamma]_{0, \alpha, \bB_r (p)}) \left(\frac{\rho}{r}\right)^{\beta}
\end{align}
\end{Theorem}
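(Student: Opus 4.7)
The plan is to prove Theorem \ref{t:main-preciso} by reducing the higher-multiplicity case to the case $Q=1$ handled in \cite{HM}. The strategy is to decompose $T$, in a small ball around $p$, as a sum $T = T_1 + \cdots + T_Q$ in which each $T_i$ is a $(\Lambda', r', \alpha)$-almost minimizing integral current with $\partial T_i = \a{\Gamma}$, and then apply the $Q=1$ case of the theorem to each sheet separately.

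\textbf{Step 1: Preliminary structure of tangent cones.} Using Definition \ref{d:1.1} together with a standard first variation argument at the boundary, I would establish a boundary monotonicity formula in which the density ratio $\|T\|(\bB_\rho(p))/(\pi \rho^2)$ is monotone up to corrections of order $\Lambda \rho^\alpha + [\Gamma]_{0,\alpha,\bB_r(p)}\rho^\alpha$. This yields existence of $\Theta(T,p)$ and, combined with the excess hypothesis (d), compactness of the family $\{T_{p,\rho}\}$. Any subsequential limit $S$ is an area-minimizing cone with $\partial S = Q\a{T_p\Gamma}$, hence a finite sum $\sum_j Q_j \a{H_j}$ of oriented half-planes meeting along $T_p\Gamma$ with $\sum_j Q_j = Q$.

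\textbf{Step 2: Decomposition into sheets.} The heart of the proof is to construct a decomposition
\begin{equation*}
T\res \bB_{r/2}(p) = T_1 + \cdots + T_Q,
\end{equation*}
where each $T_i$ is a $2$-dimensional integral current in $\bB_{r/2}(p)$ with $\partial T_i = \a{\Gamma}$, and each $T_i$ is $(\Lambda', r/2, \alpha)$-almost minimizing with $\Lambda' \leq C(\Lambda + [\Gamma]_{0,\alpha,\bB_r(p)})$. The almost-minimality of each $T_i$ transfers from $T$: for any competitor $\widetilde T_i$ of $T_i$ supported in $\bB_s(p) \subset \bB_{r/2}(p)$, the sum $\widetilde T_i + \sum_{k\neq i} T_k$ is a competitor of $T$, and \eqref{e:1.1} for $T$ yields the corresponding inequality for $T_i$ after absorbing the fixed contributions of $\sum_{k\neq i}\|T_k\|$. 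The delicate part is the construction itself: when the $Q$ sheets cluster around distinct half-planes the decomposition reduces to a localization via cutoffs near each $H_j$; when several sheets accumulate on a single half-plane, one needs a boundary Lipschitz approximation and a multi-valued graph representation of $T$, in the spirit of the interior theory of De Lellis-Spadaro adapted to the boundary setting, to split that cluster into $Q_j$ single-valued pieces.

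\textbf{Step 3: Conclusion via Hirsch-Marini.} With the decomposition in hand, the $Q=1$ case of Theorem \ref{t:main-preciso}, namely the main result of \cite{HM}, applies to each $T_i$: each $T_i$ has a unique tangent half-plane $S_i$ with $\partial S_i = \a{T_p\Gamma}$, and polynomial decay estimates for excess, flat distance, and Hausdorff distance hold for each $T_i$. Then $S := \sum_i S_i$ is the unique tangent cone to $T$ at $p$, and the estimates \eqref{e:decay-excess} and its analogues for $T$ follow by summing: the triangle inequality for the flat and Hausdorff distances, and additivity of the excess up to lower-order interaction terms which are absorbed using the smallness of $\varepsilon_0$, $\Lambda r^\alpha$, and $[\Gamma]_{0,\alpha}r^\alpha$.

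\textbf{Main obstacle.} The crux is Step 2: producing a decomposition with uniformly controlled almost-minimality constants. When the tangent cone has $Q$ spatially distinct half-planes the decomposition is essentially immediate from cutoffs, and almost-minimality of each $T_i$ is an easy consequence. The hard case is when several sheets cluster around a single half-plane, since then the individual $T_i$ are not determined by coarse geometry and one must rely on a quantitative boundary version of the multi-valued graph decomposition. Making this decomposition compatible with the (quantitative) smallness hypotheses (a)-(d), with constants that depend only on $n,\alpha,\Theta_0$, is what underlies the polynomial rate obtained in the final estimates.
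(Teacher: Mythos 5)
Your Step 2 is the point where the argument breaks down, and it is worth being precise about why. You propose to decompose the current $T$ itself, near $p$, as $T=T_1+\cdots+T_Q$ with each $T_i$ an almost-minimizing integral current satisfying $\partial T_i=\a{\Gamma}$, and you acknowledge that the hard case is when several sheets cluster on a single half-plane, proposing to resolve it with ``a boundary Lipschitz approximation and a multi-valued graph representation''. But such a sheet-by-sheet decomposition of $T$ at a boundary point of multiplicity $Q\geq 2$ is essentially the open problem of White that motivates this paper: no boundary multi-valued approximation with the required quantitative control is available, and the standard route to building one \emph{presupposes} uniqueness of the tangent cone with a rate --- i.e.\ the very statement being proved --- so the argument is circular. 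There is also a structural obstruction independent of technology: by Proposition \ref{p:classification} the tangent cone at $p$ need not be a sum of $Q$ half-planes with $\sum_j Q_j=Q$ as asserted in your Step 1; it may contain full planes through $p$ (the $S^{\rm int}$ part, invisible to $\partial T$) and, in the ``closed book'' alternative (iiib), has the form $Q^+\a{V^+}+Q^-\a{V^-}$ with $Q^+-Q^-=Q$ and density $(Q^++Q^-)/2>Q/2$. In that case no splitting of $T$ into $Q$ pieces of boundary multiplicity one can be mass-additive and consistent with the tangent cone, and even for $Q=1$ the Hirsch--Marini tangent cone is not in general a single half-plane, so Step 3 (``each $T_i$ has a unique tangent half-plane $S_i$'') is not what their theorem delivers.

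The paper avoids decomposing $T$ altogether. It runs White's epiperimetric-inequality scheme: after straightening $\Gamma$ (Lemma \ref{Lemma:2.1}) and establishing almost-monotonicity (Proposition \ref{t:2.3}), the only place where the multiplicity $Q$ enters is in proving the $(\varepsilon,\delta)$-epiperimetric property for every cone in $\mathscr{C}(Q,\bar Q,n)$ (Proposition \ref{p:epiperimetric}, made uniform by the compactness Corollary \ref{c:epi-uniform}). The reduction to $Q=1$ happens there, at the level of the \emph{one-dimensional cross-sections on $\partial\bB_1$}: Lemma \ref{l:decomposition} splits the competitor $Z$ into pieces $Z_i$ close to cross-sections $R_i$ of cones in $\mathscr{C}(1,\bar Q,n)$ plus a small cycle, using Federer's decomposition theorem for $1$-dimensional integral currents (every indecomposable piece is a Lipschitz curve) together with the classification Proposition \ref{p:classification}. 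This is exactly the place where the restriction to $2$-dimensional currents makes the problem tractable, and it is the idea your proposal is missing.
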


Theorem \ref{t:main} follows easily from the above more precise version and the classical monotonicity formula (which we recall below). Moreover, a simple scaling argument reduces it to the case $p=0$ and $r=1$, which will be the focus of the rest of this note.

\section{Straightening the boundary and monotonicity formula}

Following \cite{HM}, a standard computation which ``straightens'' the boundary with a suitable $C^{1,\alpha}$ diffeomorphism reduces Theorem \ref{t:main-preciso} to the case in which $\Gamma$ is a straight line. We report here the relevant technical lemma, which corresponds to \cite[Lemma 2.1]{HM}, and clearly holds in any dimension.

\begin{Lemma}\label{Lemma:2.1}
For every $\alpha_1\in (0, 1)$ and $m,n$ integers there exist constants $\varepsilon_1 = \varepsilon_1(m,n,\alpha_1)>0$ and $C_1 (m,n,\alpha_1)$, with the following properties.
Whenever $\Gamma$ is the graph of a $C^{1, \alpha_1}$ function $\psi: \mathbb R^m\supset B_1 \to \mathbb R^n$ with $\psi(0)=0$, $D\psi (0)=0$ and $[D\psi]_{\alpha_1, B_1} \leq \varepsilon_1$, there is a $C^{1,\alpha_1}$ diffeomorphism $\phi: \bB_1 \to \bB_1$ such that
\begin{itemize}
\item[(i)] $\phi$ maps each $\partial \bB_\rho$ onto itself;
\item[(ii)] $\phi$ maps $\Gamma \cap \bB_1$ onto $\bB_1 \cap (\mathbb R^m \times \{0\})$;
\item[(iii)] $[D\phi]_{\alpha_1, \bB_1} \leq C_1 [D\psi]_{\alpha_1, B_1}$;
\item[(iv)] $|x|^{-1} |\phi(x)-x|+|D \phi(x)-{\rm Id}| \leq C_1 [D\psi]_{\alpha_1, B_1} |x|^{\alpha_1}$ for all $x\in \bB_1$.
\end{itemize}
\end{Lemma}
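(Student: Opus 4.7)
The plan is to build $\phi$ as the composition of the natural graph-flattening map with a radial rescaling that restores the preservation of spheres. Introduce the $C^{1,\alpha_1}$ diffeomorphism $\Psi(u,v) := (u, v+\psi(u))$, which sends the $m$-plane $\mathbb{R}^m \times \{0\}$ onto $\Gamma$, and whose inverse $\Psi^{-1}(u,v) = (u, v-\psi(u))$ sends $\Gamma$ back to the $m$-plane but does not preserve spheres. To correct this I would set
\[
\phi(z) := \frac{|z|}{|\Psi^{-1}(z)|}\,\Psi^{-1}(z) \quad \text{for } z\neq 0,\qquad \phi(0):=0.
\]
By construction $|\phi(z)|=|z|$, which immediately gives (i). Since $\Psi^{-1}$ maps $\Gamma$ into $\mathbb{R}^m\times\{0\}$ and the radial rescaling preserves linear subspaces, $\phi(\Gamma \cap \bB_1) \subset \mathbb{R}^m\times\{0\}$; combined with the diffeomorphism property established below, (ii) then follows from a dimension count on each sphere $\partial \bB_\rho$.

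To verify (iv) I would introduce $F(z):=\Psi^{-1}(z)-z = (0,-\psi(u))$. Because $\psi(0)=0$ and $D\psi(0)=0$, direct integration yields
\[
|F(z)| \leq C\,[D\psi]_{\alpha_1,B_1}\,|z|^{1+\alpha_1},\qquad |DF(z)| \leq [D\psi]_{\alpha_1,B_1}\,|z|^{\alpha_1},
\]
together with $[DF]_{\alpha_1,\bB_1}\leq [D\psi]_{\alpha_1,B_1}$. Writing $\phi(z)=h(z)(z+F(z))$ with $h(z):=|z|/|z+F(z)|$, the lower bound $|z+F(z)|\geq |z|/2$ (valid for $\varepsilon_1$ small) gives $|h(z)-1|\leq C[D\psi]_{\alpha_1,B_1}|z|^{\alpha_1}$, whence the pointwise bound on $|z|^{-1}|\phi(z)-z|$ in (iv). For the derivative, differentiating $h$ produces
\[
\nabla h(z) \;=\; \frac{1}{|z+F(z)|}\Bigl[\hat z \;-\; h(z)\,(I+DF(z))^{T}\,\widehat{z+F(z)}\Bigr],
\]
and the bracketed expression vanishes identically when $F\equiv 0$. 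Splitting it as $(\hat z-\widehat{z+F}) - (h-1)\widehat{z+F} - h\,DF^{T}\widehat{z+F}$ shows $|\nabla h(z)|\leq C[D\psi]_{\alpha_1,B_1}|z|^{\alpha_1-1}$, and the product rule
\[
D\phi(z) - I \;=\; (z+F(z))\otimes \nabla h(z) \;+\; (h(z)-1)\,I \;+\; h(z)\,DF(z)
\]
then converts the apparent singularity of $\nabla h$ into the clean bound $|D\phi(z)-I|\leq C[D\psi]_{\alpha_1,B_1}|z|^{\alpha_1}$ required by (iv).

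The H\"older seminorm estimate (iii) then follows by applying the same product-rule reasoning to each summand of $D\phi - I$: given the pointwise and H\"older control on $F$, $DF$, and the explicit formula for $h$, standard product and quotient rules for H\"older seminorms on $\bB_1\setminus\{0\}$ give $[D\phi]_{\alpha_1,\bB_1}\leq C_1[D\psi]_{\alpha_1,B_1}$, and the pointwise estimate on $D\phi - I$ extends this bound across the origin. Finally, choosing $\varepsilon_1$ small enough that $D\phi$ is invertible everywhere in $\bB_1$, and using (i) to see that $\phi$ restricts on each sphere $\partial \bB_\rho$ to a $C^{1}$-near-identity map, yields the global diffeomorphism property. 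The main obstacle is the derivative computation in the second step: the radial normalization produces a gradient $\nabla h$ that blows up like $|z|^{\alpha_1-1}$ near the origin, and the entire scheme only closes because this singularity is precisely cancelled by the outer factor $(z+F(z))$ appearing in the product rule, leaving the required $|z|^{\alpha_1}$ decay and genuine $C^{1,\alpha_1}$ regularity across $0$.
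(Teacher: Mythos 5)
Your construction is essentially the standard one: the paper itself gives no proof of this lemma but defers to \cite[Lemma 2.1]{HM}, where $\phi$ is likewise obtained by composing the shear $z=(u,v)\mapsto (u,v-\psi(u))$ with the radial renormalization that restores $|\phi(z)|=|z|$, and the estimates hinge on exactly the cancellation you identify (the bracket in $\nabla h$ vanishing when $F\equiv 0$, so that the $|z|^{\alpha_1-1}$ singularity is absorbed by the factor $z+F$). The points you leave implicit --- surjectivity in (ii) via injectivity plus invariance of domain on each sphere, and the H\"older bound (iii) obtained on dyadic annuli and then globalized using the pointwise decay $|D\phi-\mathrm{Id}|\leq C[D\psi]_{\alpha_1}|z|^{\alpha_1}$ --- are routine and close correctly.
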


Observe that, if $\phi$ is as in the above lemma and $T$ is an integer rectifiable current, then
\[
(1 - C [\psi]_{\alpha_1, B_1} \rho^{\alpha_1}) \|T\| (\bB_\rho) \leq \|\phi_\sharp T\| (\bB_\rho) \leq (1+ C [\psi]_{\alpha_1, B_1} \rho^{\alpha_1}) \|T\| (\bB_\rho)\, .
\]
In particular, after applying a translation, a rescaling and a composition with a diffeomorphism $\phi$ as in the above lemma, it is immediate to check that Theorem \ref{t:main-preciso} can be reduced to the case in which $p=0$, $r=1$ and $\Gamma$ is a straight line $\ell$. More precisely, we only have to prove the following case of Theorem \ref{t:main-preciso}

\begin{Theorem}\label{t:main-preciso-2}
The conclusions of Theorem \ref{t:main-preciso} apply to the special case in which $p=0$, $r=1$, and $\Gamma$ is a straight line $\ell$ passing through the origin.
\end{Theorem}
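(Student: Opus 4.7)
The plan is to reduce to the Hirsch-Marini theorem for $Q=1$ via a sheet-decomposition argument. First, using almost-minimality (c), the standard boundary monotonicity formula (comparing $T$ with the cone $0 \cone (T \res \partial \bB_r)$ augmented so as to carry the correct boundary $Q \a{\ell \cap \partial \bB_r}$) shows that $r \mapsto \|T\|(\bB_r) / \pi r^2$ is almost non-decreasing, up to errors controlled by $\Lambda r^{\alpha}$. This yields the existence of $\Theta_0 = \Theta(T,0)$ and, via the standard compactness theorem for integer rectifiable almost-minimizing currents, at least one subsequential tangent cone $S$ at the origin with $\partial S = Q \a{\ell}$. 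A classical link argument (the link $S \cap \partial \bB_1$ is a stationary integer-multiplicity 1-varifold with boundary $Q(\a{p_+} - \a{p_-})$ at $\{p_\pm\} = \ell \cap \partial \bB_1$) classifies $S$ as a superposition of half-planes:
\begin{align*}
S = \sum_{i=1}^N k_i \a{H_i}, \qquad \partial \a{H_i} = \a{\ell}, \qquad \sum_{i=1}^N k_i = Q.
\end{align*}

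The crucial reduction is to decompose $T$ into multiplicity-one sheets. For each tangent half-plane $H_i$, consider a thin conical wedge $W_i$ around $H_i \setminus \ell$ disjoint from the other $H_j$'s. For $r$ sufficiently small, $T_{0,r} \res W_i$ is concentrated near $k_i \a{H_i}$ and carries multiplicity $k_i$ along $\ell$. Chang's interior regularity for two-dimensional area-minimizing currents, applied in $W_i$ away from $\ell$, shows that $\spt T$ there consists of $k_i$ smooth sheets without self-intersections; the constancy theorem then gives a decomposition $T \res W_i = \sum_{j=1}^{k_i} S_{i,j}$ with $\partial S_{i,j} = \a{\ell}$ and each $S_{i,j}$ inheriting an almost-minimality property with constants comparable to those of $T$. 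The Hirsch-Marini theorem (the $Q=1$ case) applies to each $S_{i,j}$ and yields polynomial convergence of $S_{i,j}$ to $\a{H_i}$ with the rates in \eqref{e:decay-excess}.

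Finally, since the spherical excess $e(0,\rho)$ is additive over the decomposition $T = \sum_{i,j} S_{i,j}$ up to overlap terms concentrated near $\ell$ (themselves bounded by the $\Lambda$ and $[\Gamma]_{0,\alpha}$ contributions after boundary straightening via Lemma \ref{Lemma:2.1}), summing the Hirsch-Marini decays over all pairs $(i,j)$ gives \eqref{e:decay-excess} for $T$ against the fixed cone $S$. The flat- and Hausdorff-distance estimates then follow by standard interpolation between excess and flat norm on one hand, and uniform density upper bounds on the other; uniqueness of the tangent cone $S$ is a consequence of the decay via a Cauchy-type argument on the scales $\rho = 2^{-k} r$. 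The main obstacle I expect is the sheet decomposition near $\ell$: one must verify that each $S_{i,j}$ is a well-defined integer rectifiable current with multiplicity-one boundary on $\ell$, and that it satisfies an almost-minimality inequality whose parameters feed cleanly into \cite{HM}. This is precisely where the reduction deviates, however slightly, from the $Q=1$ proof, since the pieces only decouple away from $\ell$ and must be glued back together along the common boundary.
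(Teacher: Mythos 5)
Your proposal attempts the reduction to the Hirsch--Marini case $Q=1$ at the level of the \emph{current} $T$, by decomposing $T$ into multiplicity-one sheets $S_{i,j}$ near each tangent half-plane and applying \cite{HM} to each sheet. This is genuinely different from the paper, which never decomposes $T$ at all: the reduction to $Q=1$ happens entirely at the level of the epiperimetric inequality, by decomposing the $1$-dimensional competitor $Z$ on $\partial \bB_1$ into indecomposable pieces via Federer's decomposition theorem \cite[4.2.25]{Federer} (Lemma \ref{l:decomposition}), matching $Q$ of them to half great circles of the cross-section and absorbing the rest by the isoperimetric inequality; the passage from the epiperimetric inequality to the decay rates then follows White's and Hirsch--Marini's scheme verbatim. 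Decomposing $1$-dimensional integral currents is elementary (they are sums of Lipschitz curves); decomposing the $2$-dimensional current $T$ is not, and that is where your argument breaks.

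Concretely, there are two gaps. First, your classification of tangent cones is incomplete: by Proposition \ref{p:classification} a tangent cone may contain full planes meeting $\ell$ only at the origin (the part $S^{\rm int}$), and the boundary part may be the ``closed book'' $Q^+ \a{V^+} + Q^- \a{V^-}$ with $Q^+ - Q^- = Q$, for which the sum of the multiplicities of the half-planes is \emph{not} $Q$ and no wedge decomposition into pieces with boundary $\a{\ell}$ is available (the two half-planes are coplanar, so the wedges are not disjoint near $\ell$, and the natural pieces carry boundaries $Q^+\a{\ell}$ and $-Q^-\a{\ell}$). Second, and more fundamentally, the claimed decomposition $T \res W_i = \sum_j S_{i,j}$ with each $\partial S_{i,j} = \a{\ell}$ and each $S_{i,j}$ almost minimizing is not justified: Chang's interior theorem allows branch points, which already obstruct a global splitting of a multiplicity-$k_i$ piece into $k_i$ disjoint smooth sheets away from $\ell$; and near $\ell$ the structure of $T$ (in particular whether its boundary multiplicity splits into $Q$ unit contributions) is exactly the open boundary-regularity problem of White that this paper is a stepping stone towards --- it cannot be assumed as an ingredient of the proof. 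The paper's cross-sectional decomposition avoids both issues, which is why it succeeds with elementary tools.
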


We also record another application of Lemma \ref{Lemma:2.1}, given in \cite[Proposition 4.3]{HM}: the argument given in \cite{HM} under the assumption that $\partial T = \a{\Gamma}$ (i.e. $Q=1$) works indeed in our case as well.

\begin{Proposition}\label{t:2.3}
Let $\Gamma$ be a straight line passing through the origin, $p=0$, $r=1$ and $T$ be as in Theorem \ref{t:main-preciso}. Then there is a constant $C_2 (\alpha, n)$ such that
\begin{align}\label{e:2.3}
 e^{C_2 \Lambda \sigma^\alpha} \frac{\|T\|\left(\bB_{\sigma}\right)}{\sigma^2}-e^{C_2 \Lambda s^{\alpha}} \frac{\|T\|\left(\bB_{s}\right)}{s^{2}} \geq \int_{\bB_{\sigma} \backslash \bB_{s}} e^{C_{2}\Lambda |z|^{\alpha}} \frac{\left|z^{\perp}\right|^{2}}{2|z|^{4}} d\|T\|(z), 
\end{align}
for every $0<s< \sigma \leq 1$.
\end{Proposition}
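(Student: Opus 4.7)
The plan is to repeat the classical cone-comparison monotonicity, exploiting the straight-line reduction so that the cone over the boundary slice exactly reproduces the boundary of $T\res\bB_r$.

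First I would slice $T$ by $f(z):=|z|$: for a.e.\ $r\in(0,1)$, $\sigma_r:=\langle T,f,r\rangle$ is an integer rectifiable $1$-current on $\partial\bB_r$ and $\partial(T\res\bB_r)=\sigma_r+Q\a{\ell\cap\bB_r}$. Form the cone $C_r:=0\ttimes\sigma_r$. The crucial computation uses $\langle\partial T,f,r\rangle=Q(\delta_{a_r}-\delta_{-a_r})$ for $\pm a_r:=\ell\cap\partial\bB_r$ together with the cone identity $\partial(0\ttimes\tau)=\tau-0\ttimes\partial\tau$, and yields $\partial C_r=\sigma_r+Q\a{\ell\cap\bB_r}=\partial(T\res\bB_r)$. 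Hence $T\res\bB_r-C_r$ is a cycle supported in $\bar\bB_r$ and equals $\partial\tilde T$ for some integral $3$-current $\tilde T$ supported (after a harmless dilation) in $\bB_r$. Plugging $-\tilde T$ into \eqref{e:1.1} and using the two-dimensional cone-mass identity $\|C_r\|(\bB_r)=\tfrac{r}{2}\mass(\sigma_r)$ gives
\[
\mu(r):=\|T\|(\bB_r)\;\le\;(1+\Lambda r^\alpha)\,\tfrac{r}{2}\,\mass(\sigma_r).
\]

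Next, combining this with the coarea identity $\mu'(r)=\mass(\sigma_r)+\int_{T\cap\partial\bB_r}\frac{|z^\perp|^2}{|z|\,|z^T|}\,d\mathcal{H}^1$ (valid for a.e.\ $r$) produces the differential inequality
\[
\mu'(r)\;\ge\;\frac{2\mu(r)}{r\,(1+\Lambda r^\alpha)}+\int_{T\cap\partial\bB_r}\frac{|z^\perp|^2}{|z|\,|z^T|}\,d\mathcal{H}^1.
\]
Using $(1+\Lambda r^\alpha)^{-1}\ge 1-\Lambda r^\alpha$, the integrating factor $g(r):=\exp(C_2\Lambda r^\alpha)$ with $C_2:=2/\alpha$ makes $\frac{d}{dr}\bigl(g(r)\mu(r)/r^2\bigr)\ge\tfrac{g(r)}{r^2}\int_{T\cap\partial\bB_r}\frac{|z^\perp|^2}{|z|\,|z^T|}\,d\mathcal{H}^1$. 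Integrating from $s$ to $\sigma$ and applying the coarea formula to the right-hand side --- on $\partial\bB_r$ we have $|z|=r$, so $g(r)/r^2=g(|z|)/|z|^2$, and the Jacobian factor $|z^T|/|z|$ exactly cancels the $|z^T|$ in the denominator --- yields the claimed inequality \eqref{e:2.3} (the factor $1/2$ on the right is a harmless safety margin, since one in fact obtains $|z^\perp|^2/|z|^4$). The upgrade from a.e.\ pairs to every pair $0<s<\sigma\le 1$ is standard, since $\mu$ is non-decreasing and the integrand on the right is non-negative.

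The principal obstacle is verifying that $C_r$ is a legitimate competitor: one must confirm integer rectifiability of $\sigma_r$ for a.e.\ $r$ and carry out the boundary computation $\partial C_r=\partial(T\res\bB_r)$. The latter relies decisively on $\ell$ passing through the origin, so that the cone over $Q\langle\a{\ell},f,r\rangle$ reproduces $Q\a{\ell\cap\bB_r}$ exactly; this is precisely why Lemma \ref{Lemma:2.1} is used to straighten $\Gamma$ upfront (a curved $\Gamma$ would create a boundary defect that is not obviously absorbable at this stage). Once the cone comparison is in place, the argument is entirely insensitive to $Q$ because the slice and its cone scale linearly in the boundary multiplicity, which is why the proof from \cite{HM} (written for $Q=1$) transfers verbatim.
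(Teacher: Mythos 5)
Your argument is the classical cone--comparison proof of almost monotonicity, which is exactly the route the paper intends: it gives no proof of its own but cites \cite[Proposition 4.3]{HM} and observes that the argument there is insensitive to $Q$ and to the dimension. Your outline matches it: slice by $|z|$, use that $\ell$ passes through the origin so that $0\ttimes\sigma_r$ has the same boundary as $T\res\bB_r$ (the cone over $Q\a{\ell\cap\bB_r}$ being a null $2$-current), feed the resulting cycle into Definition \ref{d:1.1}, and integrate the differential inequality against $e^{C_2\Lambda r^\alpha}$ with $C_2=2/\alpha$. One intermediate step is, however, stated incorrectly, even though the Proposition as stated still follows: the ``coarea identity'' $\mu'(r)=\mathbf{M}(\sigma_r)+\int\frac{|z^\perp|^2}{|z|\,|z^T|}\,d\mathcal{H}^1$ is false, since for a.e.\ $r$ one has $\mathbf{M}(\sigma_r)=\int_{\Sigma\cap\partial\bB_r}\theta\,d\mathcal{H}^1$ (not the projected mass $\int\frac{|z^T|}{|z|}\theta\,d\mathcal{H}^1$), while $\mu'(r)\geq\int_{\Sigma\cap\partial\bB_r}\frac{|z|}{|z^T|}\theta\,d\mathcal{H}^1$; the correct pointwise gap is $\frac{|z|}{|z^T|}-1=\frac{|z^\perp|^2}{|z^T|\left(|z|+|z^T|\right)}\geq\frac{|z^\perp|^2}{2|z|\,|z^T|}$, which is smaller than your claimed density by a factor approaching $2$. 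Carrying the corrected term through the integrating factor and the final coarea conversion (which multiplies by the Jacobian $\frac{|z^T|}{|z|}$) lands exactly on $\frac{|z^\perp|^2}{2|z|^4}$. So the factor $\frac{1}{2}$ in \eqref{e:2.3} is genuinely produced by the cone comparison and is not a ``harmless safety margin''; the sharper density $\frac{|z^\perp|^2}{|z|^4}$ would require the first-variation (varifold) derivation rather than this one. With that algebraic correction your proof is complete and coincides with the one the paper relies on.
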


In fact it can be readily checked that the argument given in \cite{HM} does not depend on the dimension of the current.

\begin{Remark}\label{r:2.1}
 A classical consequence of the almost monotonicity formula is that, under the assumptions of Theorem \ref{t:main-preciso} the density of the current $T$ exists in $p$, i.e. the limit
\begin{align}\label{e:2.5}
 \Theta (T, p)=\lim _{r \rightarrow 0} \frac{\|T\|\left(\bB_{r}(p)\right)}{\pi r^{2}}.
 \end{align}
 Moreover, the function $\Theta (T, \cdot)$ is upper-semicontinuous on $\Gamma$. The proof follows the classical arguments for area-minimizing currents and can be found in \cite{HM}.
 
Another consequence is that $\frac{\|T\| (\bB_\rho (p))}{\pi \rho^2} \geq e^{-C \Lambda \rho^\alpha} \Theta (T, p) \geq \Theta (T, p) - C \Lambda \rho^\alpha \Theta (T,p)$, so that $e (p,r)\geq - C \Lambda \rho^\alpha \Theta (T,p)$. In particular \eqref{e:decay-excess} is equivalent to the same statement with $e (p, \cdot)$ replacing $|e (p, \cdot)|$.  
\end{Remark}

\section{Classification of $2$-dimensional area-minimizing cones with a straight line boundary}

In this section we give a complete description of $2$-dimensional area minimizing cones with boundary $Q\a{\ell}$ for some straight line $\ell$. 

\begin{Proposition}\label{p:classification}
Let $S$ be a $2$-dimensional integral current in $\mathbb R^{2+n}$ such that
\begin{itemize}
\item[(a)] $S$ is area minimizing;
\item[(b)] $S$ is a cone, i.e. $(\iota_{0,r})_\sharp T = T$ for every $r>0$;
\item[(c)] $\partial S = Q \a{\ell}$ for some positive integer $Q$ and a straight line $\ell$ containing the origin.
\end{itemize}
Then we can decompose $S = S^{\rm int} + S^b$ where:
\begin{itemize}
\item[(i)] $S^{\rm int}$ and $S^b$ are both area minimizing and their supports intersect only at the origin;
\item[(ii)] $\partial S^{\rm int} = 0$ and thus 
\[
S^{\rm int} = \sum_{i=1}^N Q_i \a{V_i}
\]
where $Q_1, \ldots, Q_N$ are positive integers and $V_1, \ldots, V_N$ are distinct oriented $2$-dimensional planes such that $V_i \cap V_j = \{0\}$ for all $i\neq j$;
\item[(iii)] One of the following two alternatives hold for $S^b$:
\begin{itemize}
\item[(iiia)] There are $M$ distinct oriented halfplanes $V^+_1, \ldots, V^+_M$ and $M$ positive integers $Q_i^+$ such that $\partial \a{V^+_i} = \a{\ell}$ and $S^b = \sum_i Q_i^+ \a{V_i^+}$;
\item[(iiib)] There is a single oriented plane $V$ containing $\ell$, subdivided by the latter in two oriented half-planes $V^+$ and $V^-$ with $\partial \a{V^+} = - \partial \a{V^-} = \a{\ell}$, and two positive integers $Q^+$ and $Q^-$ such that $S^b = Q^+ \a{V^+} + Q^- \a{V^-}$.
\end{itemize}
\end{itemize}
\end{Proposition}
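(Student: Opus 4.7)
The plan is to obtain a coarse structural representation of $S$ as a finite integer-weighted sum of $2$-planes and half-planes bordered by $\ell$, and then to deduce the dichotomy (iiia)/(iiib) from two non-minimality principles applied to sub-configurations of $S$. By Chang's interior $\varepsilon$-regularity for area-minimizing $2$-dimensional integral currents, the interior singular set of $S$ is discrete; the cone hypothesis then forces any interior singularity off $0$ to generate, by $0$-homogeneity, a whole ray of singularities, contradicting discreteness. Hence $\spt(S)\setminus(\ell\cup\{0\})$ is a smooth $2$-submanifold whose sheets are open pieces of $2$-planes through the origin. At a point $\ell_0\in\ell\setminus\{0\}$, conjugating the blow-up at $\ell_0$ with the $0$-homogeneity of $S$ shows that the tangent cone at $\ell_0$ exists and is invariant under translation along $\ell$, hence splits as $\ell\times C_{\ell_0}$ with $C_{\ell_0}$ a $1$-dimensional area-minimizing cone in $\ell^\perp$ having boundary $Q[0]$; such cones are finite positive-integer sums of rays from $0$. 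Translating back gives that $\spt(S)$ near $\ell\setminus\{0\}$ consists of half-planes bordered by $\ell$, and globally
\[
S=\sum_{i\in I}n_i\a{V_i}+\sum_{j\in J}m_j^+\a{W_j^+}+\sum_{k\in K}m_k^-\a{W_k^-},
\]
with the $V_i$ distinct $2$-planes meeting $\ell$ only at $0$ and the $W_j^+,W_k^-$ distinct half-planes satisfying $\partial\a{W_j^+}=\a{\ell}$, $\partial\a{W_k^-}=-\a{\ell}$, all coefficients positive integers.

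Two classical sub-configuration facts drive the classification. \textbf{(P1)} If $P_1,P_2$ are distinct $2$-planes through the origin sharing a $1$-dimensional subspace $\ell'$, then $\a{P_1}+\a{P_2}$ is not area-minimizing: an ``X-to-Y'' desingularization in a small ball centered on $\ell'\setminus\{0\}$ gives a compactly supported competitor of strictly smaller mass with identical boundary. \textbf{(P2)} If $W^+$ and $U^-$ are half-planes with $W\neq U$, both containing $\ell$ and with $\partial\a{W^+}=\a{\ell}=-\partial\a{U^-}$, then the closed ``V''-current $\a{W^+}+\a{U^-}$ admits an analogous local mass-reducing desingularization near a point of $\ell\setminus\{0\}$. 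Applying (P1) to sub-configurations of $S$ forces $V_i\cap V_j=\{0\}$ for $i\neq j$ and $V_i\cap W=\{0\}$ whenever $W$ supports a half-plane of $S^b$; moreover it forbids two distinct planes $W_{j_1},W_{j_2}$ both containing $\ell$ from having both halves populated, since otherwise $\min(m_{j_1}^+,m_{j_1}^-)\a{W_{j_1}}+\min(m_{j_2}^+,m_{j_2}^-)\a{W_{j_2}}$ would be a closed sub-current of $S$ (two planes sharing $\ell$) violating (P1). Principle (P2) forbids the coexistence of any $W_k^-$ with any $W_j^+$ in a different plane.

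Combining these constraints with the boundary balance $\sum_j m_j^+-\sum_k m_k^-=Q$, only two alternatives survive: either $K=\emptyset$, giving case (iiia); or there is a unique plane $V$ with both halves $V^+,V^-$ populated and no other half-planes at all, giving $S^b=Q^+\a{V^+}+Q^-\a{V^-}$ with $Q^+-Q^-=Q$ (case (iiib)). Setting $S^{\mathrm{int}}:=\sum_i n_i\a{V_i}$ and $S^b:=S-S^{\mathrm{int}}$, property (i) is immediate from the support disjointness above, and the area-minimality of each piece follows from mass additivity on essentially disjoint supports together with a routine competitor-gluing argument: any mass-reducing competitor for one piece glues with the other to improve $S$.

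The main obstacle is the rigorous verification of (P1) and (P2): one must explicitly build, inside a small ball centered on the relevant singular line, a compactly supported integral current with the prescribed cylindrical boundary and strictly smaller mass than the planar/half-planar configuration. Both statements are classical in spirit -- (P1) underlies the structure theorem that $2$-dimensional area-minimizing cones with zero boundary decompose into planes meeting only at $0$ -- but some care is needed in higher codimension to guarantee integrality of the competitor and exact boundary matching, typically achieved by a bridging construction exploiting that the bridge cost is subleading in the ball radius compared to the planar/half-planar area it replaces.
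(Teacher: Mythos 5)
Your overall architecture is sound and lands on the same two-stage strategy as the paper: first a structural decomposition of $S$ into integer combinations of planes and half-planes bordered by $\ell$, then an exclusion of ``crossing'' configurations to get the dichotomy. But the technical route differs in both stages, and the difference matters. For the structure theorem, the paper works entirely on the sphere: it sets $R:=\partial(S\res \bB_1)-Q\a{\ell}\res\bB_1$, observes via interior regularity that $R$ is a smooth embedded geodesic network in $\partial\bB_1\setminus\{P_1,P_2\}$, hence a finite union of great circles and half great circles joining $P_1$ to $P_2$, and then invokes the constancy theorem to recover $S$ as a weighted sum of the corresponding planes and half-planes. This bypasses your boundary blow-up at points of $\ell\setminus\{0\}$ and the classification of $1$-dimensional cones with point boundary (where, incidentally, your phrase ``positive-integer sums of rays from $0$'' should be read with per-ray orientations: the cross-section of case (iiib) is a cone of the form $Q^+\a{\rho^+}+Q^-\a{\rho^-}$ on two \emph{opposite} rays, which is genuinely length-minimizing, so rays of both boundary orientations must be allowed --- your global ansatz with the $W_k^-$ terms does allow this, so no harm done).

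The more substantive point concerns your self-identified ``main obstacle,'' the explicit mass-reducing competitors for (P1) and (P2). As written this is a genuine gap --- the X-to-Y and bridging constructions are classical but you do not carry them out --- yet the paper shows it can be dissolved with tools you already have on the table. The trick is that each offending configuration is (or contains) a \emph{closed} subcurrent of $S$ whose mass adds to that of the complement: e.g.\ in the mixed-orientation case the paper extracts $R_1=-\a{V_1^+}+\a{V_j^+}$ with $\partial R_1=0$ and $\|S^b\|=\|R_1\|+\|R_2\|$, so $R_1$ inherits area-minimality by the standard splitting lemma (the same lemma you use at the end to show $S^{\rm int}$ and $S^b$ are minimizing). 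A boundaryless $2$-dimensional area-minimizing cone is regular away from the origin, which forces $V_1^+$ and $V_j^+$ to be the two halves of one plane; the same device, applied to $\a{V_i}+\a{V_j}$, yields your (P1). In other words, interior regularity --- which you already invoke through Chang's theorem --- does the work of the competitor constructions for free, and no explicit bridging argument is needed. I would recommend restructuring your exclusion step this way rather than attempting the desingularizations directly.
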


\begin{figure}[!htb]
    \centering
    \begin{minipage}{.5\textwidth}
        \centering
        \includegraphics[width=0.9\linewidth]{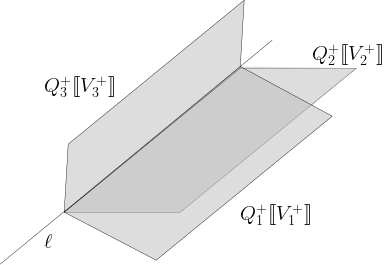}
        \caption{An illustration of $S^b$ in $(iiia)$\\ with $M=3$.}
        \label{fig:open_book}
    \end{minipage}%
    \begin{minipage}{0.5\textwidth}
        \centering
        \includegraphics[width=0.9\linewidth]{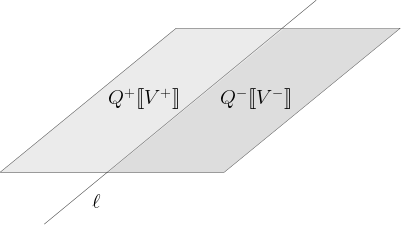}
        \caption{An illustration of $S^b$ in $(iiib)$.}
        \label{fig:closed_book}
    \end{minipage}
\end{figure}

Observe that in the first case we must have $\sum_i Q_i^+ = Q$ and thus $M\leq Q$, while in the second case we must have $Q^+-Q^-=Q$ and thus $Q^+ > Q^-$. Before coming to the proof of Proposition \ref{p:classification} we come to a simple corollary.

\begin{Corollary}\label{c:compactness}
The density at the origin of a $2$-dimensional area-minimizing cone as in Proposition \ref{p:classification} is always a positive multiple of $\frac{1}{2}$ and it is at least $\frac{Q}{2}$. If for every $Q$, $\bar{Q}$, and $n$ positive integers we denote by $\mathscr{C} (Q, \bar Q, n)$ the space of $2$-dimensional area-minimizing cones $S$ in $\mathbb R^{2+n}$ such that
\begin{itemize}
\item $\partial S = Q \a{\ell}$ for some line $\ell$ passing through the origin,
\item $\Theta (S, 0) \leq \frac{\bar Q}{2}$,
\end{itemize}
then $\mathscr{C} (Q, \bar Q, n)$ is compact in the topology of local flat convergence.
\end{Corollary}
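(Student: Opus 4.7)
The proof splits naturally into the density claim and the compactness claim. For the density I will read off both conclusions directly from the decomposition $S = S^{\rm int} + S^b$ supplied by Proposition \ref{p:classification}. For compactness I will combine uniform mass bounds (from the cone property together with the density bound) with Federer--Fleming compactness and the standard closure of the area-minimizing class under flat convergence.

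\textbf{Density.} Since the supports of $S^{\rm int}$ and $S^b$ meet only at the origin, the Radon measures $\|S^{\rm int}\|$ and $\|S^b\|$ are mutually singular and the densities add: $\Theta(S,0) = \Theta(S^{\rm int},0) + \Theta(S^b,0)$. The interior part contributes the integer $\Theta(S^{\rm int},0) = \sum_{i=1}^N Q_i$, since each $\a{V_i}$ is a multiplicity-one plane through the origin. A half-plane $\a{V^+}$ with $\partial \a{V^+} = \a{\ell}$ has density $\tfrac12$ at $0$; therefore in case (iiia) one gets $\Theta(S^b,0) = \tfrac12 \sum_{i=1}^M Q_i^+ = Q/2$, using the identity $\sum_i Q_i^+ = Q$, while in case (iiib) one has $\Theta(S^b,0) = (Q^+ + Q^-)/2 \geq Q/2$, since $Q^- \geq 1$ and $Q^+ - Q^- = Q$. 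Summing, $\Theta(S,0)$ is a positive multiple of $\tfrac12$ and is at least $Q/2$.

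\textbf{Compactness.} Fix a sequence $(S_k) \subset \mathscr{C}(Q,\bar Q,n)$. Since each $S_k$ is a cone with $\Theta(S_k,0) \leq \bar Q/2$, we have $\|S_k\|(\bB_R) = \Theta(S_k,0)\, \pi R^2 \leq \bar Q \pi R^2/2$ for every $R>0$, while $\|\partial S_k\|(\bB_R) = 2QR$. Federer--Fleming compactness yields a subsequence, not relabeled, and an integer rectifiable current $S$ with $S_k \to S$ in local flat topology. The limit $S$ is area-minimizing by the usual flat-closure argument (mass lower semicontinuity plus comparison with competitors), and it is a cone because the homothety $(\iota_{0,r})_\sharp$ is continuous in the flat topology and fixes each $S_k$. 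The oriented lines $\ell_k$ are parametrized by unit vectors $e_k \in \s^{1+n}$; by compactness of $\s^{1+n}$, after a further subsequence $e_k \to e_\infty$, whence $\a{\ell_k} \to \a{\ell_\infty}$ in local flat topology, where $\ell_\infty$ is the oriented line through $0$ in the direction $e_\infty$. Continuity of $\partial$ in flat topology then gives $\partial S = Q\a{\ell_\infty}$. Finally, since $S$ is a cone, $\Theta(S,0) = \|S\|(\bB_1)/\pi$, and by lower semicontinuity of mass $\|S\|(\bB_1) \leq \liminf_k \|S_k\|(\bB_1) \leq \bar Q \pi/2$, so $S \in \mathscr{C}(Q,\bar Q,n)$.

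\textbf{Main obstacle.} The only ingredient that requires care is verifying that the boundary passes to the limit with the correct structure, i.e.\ that after extraction the limit of $Q\a{\ell_k}$ is of the form $Q\a{\ell_\infty}$ for a single oriented line $\ell_\infty$; this reduces to compactness of the Grassmannian of oriented lines through the origin combined with continuity of $\partial$ in the flat topology, and is essentially routine.
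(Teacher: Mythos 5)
Your argument is correct and matches what the paper intends: the authors state this as a ``simple corollary'' of Proposition \ref{p:classification} without writing a proof, and the canonical filling-in is exactly yours --- read the density off the decomposition $S=S^{\rm int}+S^b$ (integer contribution from the planes, $\tfrac12$ per half-plane, with $\sum_i Q_i^+=Q$ resp.\ $Q^++Q^-\geq Q$), and get compactness from the cone identity $\|S\|(\bB_R)=\Theta(S,0)\pi R^2$, Federer--Fleming, closure of the area-minimizing class, and lower semicontinuity of mass. No gaps.
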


\begin{proof}[Proof of Proposition \ref{p:classification}]
Recall that $R := \partial (S\res \bB_1) - Q \a{\ell} \res \bB_1$ is a $1$-dimensional integral current with $\partial R = Q \a{P_1} - Q \a{P_2}$, where $P_1$ and $P_2$ denote the two intersection points of $\ell$ with the unit sphere $\partial \bB_1$. Observe also that, by the interior regularity theory, $R$ is in fact a smooth $1$-dimensional embedded minimal submanifold of $\partial B_1\setminus \{P_1,P_2\}$. The support of $R$ consists therefore of the union of a finite number of distinct great circles $\gamma_1, \ldots, \gamma_N$ contained in $\partial B_1\setminus \{P_1,P_2\}$ and a finite number of distinct half great circles $\eta_1, \ldots, \eta_M$, each with endpoints $P_1$ and $P_2$. We denote by $V_1, \ldots, V_N$ the $2$-dimensional planes containing $\gamma_1, \ldots, \gamma_N$, for which we choose an arbitrary orientation, and by $V_1^+, \ldots, V_M^+$ the $2$-dimensional halfplanes containing $\eta_1, \ldots, \eta_M$, for which we fix the orientation such that $\partial \a{V_i^+} = \a{\ell}$. By the constancy theorem we must have
\[
S := \underbrace{\sum_i Q_i \a{V_i}}_{=:S^{\rm int}} + \underbrace{\sum_j \bar Q_j \a{V_j^+}}_{=:S^b}\, ,
\] 
for some nonzero integers $Q_1, \ldots, Q_N, \bar{Q}_1, \ldots , \bar{Q}_M$.
Without loss of generality we can change the orientation of the $V_i$ so that all the $Q_i$ are positive. $S^{\rm int}$ satisfies therefore the claims of the proposition. As for $S^b$ we distinguish two cases:
\begin{itemize}
\item[(a)] The $\bar{Q}_j$ are all positive. $S^b$ satisfies, therefore, the description of case $(iiia)$.
\item[(b)] One $\bar{Q}_j$, say $\bar{Q}_1$, is negative, and we denote it by $-\bar{Q}$ with $\bar Q$ positive. Observe that, since $\partial S^b = \sum_i \bar{Q}_i \a{\ell} = Q \a{\ell}$, we must have $\sum_i \bar{Q}_i = Q$ and thus at least one $\bar{Q}_j$ is positive. Fix any of them. We argue then as in \cite[Lemma 3.18]{DDHM}. We observe first that we can decompose $S^b$ as 
\[
S^b = \underbrace{-\a{V_1^+} + \a{V_j^+}}_{=:R_1} + \underbrace{(\bar Q_j -1) \a{V_j^+} - (\bar{Q}-1) \a{V_1^+} + \sum_{i\not\in \{1, j\}} \bar{Q}_i \a{V_i^+}}_{=:R_2}\, .
\]
It is clear that $\|S^b\|= \|R_1\|+ \|R_2\|$ and that $\partial R_1 = 0$. It thus follows that $R_1$ is an area-minimizing cone without boundary. It is therefore regular except possibly at the origin, which implies that $V_1^+$ and $V_j^+$ are two halves of the same plane. Since this argument can be applied to every pair $l,k$ with $\bar Q_l < 0 < \bar Q_k$, it follows then that indeed $M=2$, $\bar{Q}_1<0<\bar{Q}_2$. If we then let $V$ be the two-dimensional plane of which $V_1^+$ and $V_2^+$ are the two halves, claim $(iiib)$ readily follows after we let $V^+$ be $V_2^+$ and $V^-$ be the opposite (in terms of orientation) of $V_1^+$.\qedhere
\end{itemize}
\end{proof}

\section{White's and Hirsch-Marini's epiperimetric inequalities}

The proof of Theorem \ref{t:main-preciso} is based on a so-called epiperimetric inequality. 

\begin{Definition}\label{Def:EpiperimetricInequality} Let $S$ be a $2$-dimensional area-minimizing cone in $\mathbb R^{2+n}$ with $\partial S=Q\a{\ell}$ for some straight line $\ell$ passing through the origin and some positive integer $Q$, and let $R := \partial (S\res \bB_1) - Q \a{\ell} \res \bB_1$ be its cross section. Given two positive constants $\varepsilon$ and $\delta$ we say that $S$ satisfies the \textbf{\emph{$(\delta,\varepsilon)$-epiperimetric property}}, if the following holds. Let $Z \in \mathbf{I}_{1}\left(\partial \bB_{1}\right)$ be such that
 \begin{enumerate}[(i)]
 \item $\partial Z=\partial R$,
 \item $\mathcal{F}(Z-R) \leq \eps$,
\item $\|Z\| (\partial \bB_1) - \|R\| (\partial \bB_1) \leq \eps$,
\item $\operatorname{dist}(\operatorname{spt}(Z), \operatorname{spt}(R)) \leq \eps$.
\end{enumerate}
Then there is $H \in \mathbf{I}_{2}\left(\bB_{1}\right)$ such that $\partial H=Z-Q \a{\ell}\res \bB_1$ and
\begin{align}\label{e:3.1}
\|H\|\left(\bB_{1}\right)-\|S\|\left(\bB_{1}\right) \leq\left(1-\delta\right)\left(\| \a{0} \ttimes Z\|\left(\bB_{1}\right)-\|S\|\left(\bB_{1}\right)\right).
\end{align} 
\end{Definition}

\begin{Remark} Observe that as it is immediate to see $\a{0} \ttimes Z= \a{0} \ttimes \partial H$, and so \eqref{e:3.1} is the same as the following equation 
\begin{equation}
\|H\|\left(\bB_{1}\right)-\|S\|\left(\bB_{1}\right) \leq\left(1-\delta\right)\left(\|\a{0} \ttimes\partial H\|\left(\bB_{1}\right)-\|S\|\left(\bB_{1}\right)\right).
\end{equation}
\end{Remark}

The main point in the proof of Theorem \ref{t:main-preciso-2} is then that {\em every} $2$-dimensional area-minimizing cone satisfies an epiperimetric inequality.

\begin{Proposition}\label{p:epiperimetric}
For every $2$-dimensional area-minimizing cone $S$ in $\mathbb R^{2+n}$ as in Definition \ref{Def:EpiperimetricInequality}, there are positive constants $\varepsilon (S), \delta (S)$ such that $S$ satisfies the $(\varepsilon,\delta)$-epiperimetric property. 
\end{Proposition}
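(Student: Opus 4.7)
The plan is to use the classification of Proposition \ref{p:classification} to reduce the epiperimetric inequality for $S$ to the analogous inequalities on its simpler building blocks: planes with multiplicity (handled by White's interior epiperimetric inequality) and single halfplanes with multiplicity (handled by an appropriate extension of Hirsch--Marini's boundary inequality, which as stated covers only the $Q=1$ case).

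First I would write $S = S^{\text{int}} + S^b$ as in Proposition \ref{p:classification}, so the cross-section $R = \partial(S \res \bB_1) - Q \a{\ell} \res \bB_1$ decomposes as $R = \sum_i Q_i \a{\gamma_i} + \sum_j \bar Q_j \a{\eta_j}$, with great circles $\gamma_i$ (interior) and half great circles $\eta_j$ (boundary), pairwise disjoint except possibly at the two endpoints $\{P_1, P_2\} = \ell \cap \partial \bB_1$. For $\varepsilon = \varepsilon(S)$ sufficiently small, the Hausdorff- and flat-closeness conditions (ii)--(iv) force $\text{spt}(Z)$ to lie in pairwise disjoint tubular neighborhoods of the various $\gamma_i$ and $\eta_j$ outside an arbitrarily small neighborhood of $\{P_1, P_2\}$. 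Via slicing and the constancy theorem applied on each tube, one then decomposes $Z = \sum_i Z_i + \sum_j Z_j^b$ so that $Z_i$ is supported near $\gamma_i$ with $\partial Z_i = 0$ and $Z_j^b$ is supported near $\eta_j$ with $\partial Z_j^b = \bar Q_j (\a{P_1} - \a{P_2})$; moreover each $Z_i$ (resp.\ $Z_j^b$) remains close to $Q_i \a{\gamma_i}$ (resp.\ $\bar Q_j \a{\eta_j}$) in flat distance, mass, and support.

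Given such a decomposition, for each interior piece $Z_i$ one invokes White's interior epiperimetric inequality applied to the plane $Q_i \a{V_i}$ to obtain a filling $H_i$ with $\partial H_i = Z_i$, and for each boundary piece $Z_j^b$ (in the open-book case (iiia)) one applies the corresponding single-halfplane epiperimetric inequality to $\bar Q_j \a{V_j^+}$. The closed-book case (iiib) is reduced to the open-book situation plus one extra interior piece via the algebraic identity
\[
Q^+ \a{V^+} + Q^- \a{V^-} \;=\; Q \a{V^+} + Q^- \bigl(\a{V^+} + \a{V^-}\bigr) \;=\; Q \a{V^+} + Q^- \a{V},
\]
where $V$ is the full plane of which $V^\pm$ are the two halves, and the corresponding splitting of the competitor $Z_j^b$ follows by the same constancy argument. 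Summing over all pieces, $H := \sum_i H_i + \sum_j H_j^b$ satisfies $\partial H = Z - Q \a{\ell} \res \bB_1$, and since the supports of the pieces are pairwise essentially disjoint, masses add; combining the piecewise inequalities with $\delta := \tfrac{1}{2}\min \delta_i$ and absorbing lower-order error terms produced by the decomposition then yields the desired inequality \eqref{e:3.1}.

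The main obstacle is the decomposition step near the two boundary endpoints $P_1, P_2$, where all the halfplane cross-sections $\eta_j$ coalesce: the slicing/constancy argument only yields a decomposition away from $\{P_1,P_2\}$, and one must produce a careful cone--cap construction on a small spherical annulus around each $P_k$ to glue the boundary pieces with the prescribed boundary conditions while absorbing the attendant mass into a lower-order error controlled by $\varepsilon$. A secondary technical point is to check that the Hirsch--Marini proof in \cite{HM}, written for a boundary of multiplicity one, extends without essential modification to a single halfplane of higher multiplicity; this should hold since the cone is still unique minimizer and the one-halfplane geometric and variational structure are unchanged.
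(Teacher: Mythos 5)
Your overall strategy---decompose $Z$ into pieces adapted to a decomposition of $S$ and apply the known epiperimetric inequalities to each piece---is the right one and is indeed the strategy of the paper, but there is a genuine gap in where you stop decomposing. You split $Z$ according to the geometric pieces $Q_i\a{\gamma_i}$ and $\bar Q_j\a{\eta_j}$ of the cross section and then propose to handle a piece close to $\bar Q_j\a{\eta_j}$ with $\bar Q_j\geq 2$ by ``an appropriate extension of Hirsch--Marini's boundary inequality'' to a single halfplane of higher multiplicity, asserting that the $Q=1$ proof extends without essential modification. This is not a secondary technical point: a halfplane with multiplicity $\bar Q_j\geq 2$ is itself a cone in $\mathscr{C}(\bar Q_j,\bar Q,n)$, so what you are invoking is precisely an instance of Proposition \ref{p:epiperimetric} with boundary multiplicity $\geq 2$, i.e.\ the statement being proved. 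The competitors allowed near $\bar Q_j\a{\eta_j}$ are arbitrary integral $1$-currents with boundary $\bar Q_j(\a{N}-\a{P})$; they need not be multiplicity-$\bar Q_j$ copies of a single curve, so the Hirsch--Marini argument does not apply verbatim and your reduction is circular as written. The same issue reappears in your closed-book reduction, where the piece $Q\a{V^+}$ again has boundary multiplicity $Q\geq 2$.

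The paper closes exactly this gap by decomposing the \emph{competitor} rather than only the cone: by Federer's decomposition theorem \cite[4.2.25]{Federer}, $Z$ splits into indecomposable $1$-currents, each of which is an injective Lipschitz curve and therefore has boundary either $0$ or exactly $\a{N}-\a{P}$ with multiplicity one. A compactness/contradiction argument over $\mathscr{C}(Q,\bar Q,n)$ (Lemma \ref{l:decomposition}) then produces $Q$ pieces, each close in flat norm, mass, and support to the cross section of some cone in $\mathscr{C}(1,\bar Q,n)$, plus one closed piece of small mass handled by the isoperimetric inequality; the genuine $Q=1$ result of \cite{HM} applies to each of the $Q$ pieces, with all boundaryless limit components absorbed into a single $S_1=\a{0}\cone T_\sharp+\a{0}\cone T_1\in\mathscr{C}(1,\bar Q,n)$ rather than treated separately by White's interior inequality. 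Note also that this argument disposes of what you correctly identify as the ``main obstacle'' (matching the decomposition near the poles $P_1$, $P_2$, which you leave unresolved): connectedness of the indecomposable pieces together with Hausdorff convergence of the supports replaces any cone--cap construction on spherical annuli. You would need to supply both ingredients---the multiplicity-one reduction of the competitor and the treatment near the poles---to make your argument complete.
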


Combined with Corollary \ref{c:compactness} we then easily conclude that

\begin{Corollary}\label{c:epi-uniform}
For every choice of integers $Q\geq 0, \bar Q\geq 1$, and $n\geq 1$ there are positive constants $\varepsilon (Q, \bar Q, n), \delta (Q, \bar Q, n)$ such that every $S\in \mathscr{C} (Q, \bar Q, n)$ satisfies the $(\varepsilon,\delta)$-epiperimetric property.
\end{Corollary}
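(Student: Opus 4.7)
The plan is a compactness--contradiction argument combining Corollary~\ref{c:compactness} with the single-cone epiperimetric inequality of Proposition~\ref{p:epiperimetric}. Assuming the claim fails, I would extract sequences $S_k \in \mathscr{C}(Q,\bar Q,n)$ and $Z_k \in \mathbf{I}_1(\partial \bB_1)$ such that $Z_k$ satisfies (i)--(iv) of Definition~\ref{Def:EpiperimetricInequality} for $S_k$ with $\varepsilon_k = 1/k$, while \eqref{e:3.1} fails for every admissible $H$ with $\delta_k = 1/k$. By the rotational invariance of the problem I may simultaneously rotate each $(S_k, Z_k)$ so that all boundary lines $\ell_k$ coincide with a fixed line $\ell$; the cross-sections $R_k := \partial(S_k\res \bB_1) - Q\a{\ell}\res \bB_1$ then all carry the common boundary $Q(\a{P_1}-\a{P_2})$, where $\{P_1,P_2\} = \ell\cap \partial \bB_1$. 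Corollary~\ref{c:compactness} next provides a subsequence with $S_k \to S_\infty \in \mathscr{C}(Q,\bar Q,n)$ locally in flat norm, and the finitely many combinatorial types allowed by Proposition~\ref{p:classification} let me pass to a further subsequence along which the numbers of planes and half-planes of $S_k$, together with their multiplicities, are constant in $k$.

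The decisive structural observation is that on $\mathscr{C}(Q,\bar Q,n)$ the ball mass is \emph{discrete}: by Proposition~\ref{p:classification} every $S\in\mathscr{C}(Q,\bar Q,n)$ satisfies
\[
\|S\|(\bB_1)=\pi\sum_i Q_i + \tfrac{\pi}{2}\sum_j \bar Q_j,
\]
depending only on the multiplicities in its decomposition. Passing to the limit, distinct planes or half-planes can coalesce but, since all summands carry positive integer multiplicities on consistently oriented (half-)planes of an area-minimizing cone, no cancellation is possible and total multiplicities are preserved. Hence along my subsequence $\|S_k\|(\bB_1)=\|S_\infty\|(\bB_1)$ \emph{exactly} for every $k$, and by the same rigidity the cross-sections $R_k\to R_\infty$ in flat norm, in mass on $\partial \bB_1$, and in Hausdorff distance of supports.

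With these pieces in place the finish is essentially mechanical. Since $\partial Z_k = \partial R_k = \partial R_\infty$, hypothesis (i) for the pair $(S_\infty, Z_k)$ is automatic, and combining (ii)--(iv) for $(S_k, Z_k)$ with $R_k\to R_\infty$ via the triangle inequality shows that for all large $k$ the current $Z_k$ satisfies (i)--(iv) for $S_\infty$ with the constant $\varepsilon_\infty := \varepsilon(S_\infty)$ furnished by Proposition~\ref{p:epiperimetric}. Applying that proposition yields $H_k$ with $\partial H_k = Z_k - Q\a{\ell}\res \bB_1$ and
\[
\|H_k\|(\bB_1) - \|S_\infty\|(\bB_1) \leq (1-\delta_\infty)\bigl(\|\a{0}\ttimes Z_k\|(\bB_1) - \|S_\infty\|(\bB_1)\bigr).
\]
Setting $\cE_k := \|\a{0}\ttimes Z_k\|(\bB_1) - \|S_k\|(\bB_1)$ and using the exact equality $\|S_k\|(\bB_1)=\|S_\infty\|(\bB_1)$, this upgrades to $\|H_k\|(\bB_1) - \|S_k\|(\bB_1) \leq (1-\delta_\infty)\cE_k$; for $k$ so large that $1/k<\delta_\infty$, this contradicts the failure of \eqref{e:3.1} whenever $\cE_k>0$, and the trivial choice $H=\a{0}\ttimes Z_k$ handles the remaining case $\cE_k\le 0$.

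The principal obstacle--and the only real subtlety--is the additive error $\|S_\infty\|(\bB_1)-\|S_k\|(\bB_1)$ that would appear in a naive perturbative comparison and could dominate $\cE_k$ when $\cE_k\downarrow 0$. The discrete-mass observation above is precisely what makes this error vanish identically and what allows the single-cone epiperimetric inequality of Proposition~\ref{p:epiperimetric} to be propagated uniformly over the entire compactness class $\mathscr{C}(Q,\bar Q,n)$.
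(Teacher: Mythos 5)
Your argument is correct and coincides with the derivation the paper intends: Corollary \ref{c:epi-uniform} is deduced from the per-cone Proposition \ref{p:epiperimetric} via the compactness of $\mathscr{C}(Q,\bar Q,n)$ (Corollary \ref{c:compactness}), and your key step --- that the quantization $\|S\|(\bB_1)\in\{\tfrac{k}{2}\pi\}$ forces $\|S_k\|(\bB_1)=\|S_\infty\|(\bB_1)$ exactly for large $k$ --- is precisely the device the paper uses in the proof of Lemma \ref{l:decomposition}. The only point to tighten is the justification of $\|S_k\|(\bB_1)\to\|S_\infty\|(\bB_1)$: this follows from the standard mass continuity of flat-converging area-minimizing currents (and then gives the convergence of the cross-sections $R_k$), not from a ``no cancellation by consistent orientation'' argument, since the orientations of the interior planes in Proposition \ref{p:classification} are chosen arbitrarily.
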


The proof that Proposition \ref{p:epiperimetric} implies Theorem \ref{t:main-preciso-2} given in \cite{HM} applies line by line to our case, with the caveat that, in order to achieve the independence of all the constants from the particular cone (and replace it with the density of the cone at the origin and the number $Q$) it suffices to apply Corollary \ref{c:epi-uniform}

White in \cite{White} was the first to prove the epiperimetric property for every $2$-dimensional area-minimizing cone without boundary, which we can interpret as the case $Q=0$ of Proposition \ref{p:epiperimetric}, in particular from now on we adopt the latter convention. Hirsch and Marini in \cite{HM} proved the case $Q=1$. From both statements we gather than the validity of Corollary \ref{c:epi-uniform} when $Q\in \{0,1\}$. The general case can then be concluded from the following decomposition lemma.

\begin{Lemma}\label{l:decomposition}
Let $\varepsilon_0>0$ be any given positive number and $\bar Q \geq Q \geq 2, n\geq 1$ be any given triple of integers. Then there is $\varepsilon>0$ with the following property. Assume that $S\in \mathscr{C} (Q, \bar Q, n)$ and that $Z$ is a $1$-dimensional integral current in $\partial \bB_1$ which satisfies the assumptions of Definition \ref{Def:EpiperimetricInequality}. Then there are 
\begin{itemize}
\item[(A)] $Q$ cones $S_i = R_i \ttimes \a{0} \in \mathscr{C} (1, \bar Q, n)$,
\item[(B)] $Q+1$ integral currents $Z_i$ in $\partial \bB_1$, 
\end{itemize}
such that
\begin{itemize}
\item[(i)] $Z = \sum_i Z_i$;
\item[(ii)] $\|S\| (\bB_1) = \sum_i \|S_i\| (\bB_1)$, $\|Z\| = \sum_i \|Z_i\|$;
\item[(iii)] $\partial Z_i = \partial R_i$ for all $i\in \{1, \ldots, Q\}$;
\item[(iv)] $\mathcal{F} (Z_i - R_i) \leq \varepsilon_0$ for all $i\in \{1, \ldots, Q\}$;
\item[(v)] $\|Z_i\| (\partial \bB_1) - \|R_i\| (\partial \bB_1) \leq \varepsilon_0$ for all $i \in \{1, \ldots, Q\}$;
\item[(vi)] $\operatorname{dist}(\operatorname{spt}(Z_i), \operatorname{spt}(R_i)) \leq \eps_0$ for all $i\in \{1, \ldots, Q\}$;
\item[(vii)] $\partial Z_{Q+1}=0$ and $\|Z_{Q+1}\| (\partial \bB_1) \leq \varepsilon_0$.
\end{itemize}
\end{Lemma}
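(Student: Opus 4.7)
The proof has two essentially independent parts: an algebraic decomposition of $S$ dictated by Proposition~\ref{p:classification}, followed by a structural decomposition of $Z$ matched to it.

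\smallskip
\noindent\textbf{Decomposing $S$.} My plan is to apply Proposition~\ref{p:classification} to write $S=S^{\rm int}+S^b$. In case $(iiia)$, enumerating the half-planes with multiplicity as $V^+_{j(1)},\ldots,V^+_{j(Q)}$ (so each $V_j^+$ appears $Q_j^+$ times), I set
\[
S_1 := \a{V^+_{j(1)}}+S^{\rm int}, \qquad S_i := \a{V^+_{j(i)}}\quad\text{for } i=2,\ldots,Q.
\]
In case $(iiib)$, using the identity $\a{V^+}+\a{V^-}=\a{V}$ to rewrite $S^b=Q\a{V^+}+Q^-\a{V}$, I set
\[
S_1 := (Q^-+1)\a{V^+}+Q^-\a{V^-}+S^{\rm int}, \qquad S_i := \a{V^+}\quad\text{for } i=2,\ldots,Q.
\]
In both cases $\sum_{i=1}^Q S_i=S$ and each $S_i$ is a sub-cone of $S$ with $\partial S_i=\a{\ell}$; since the constituent planes and half-planes of $S$ meet only at the origin, a standard cut-and-paste argument gives both $\|S\|(\bB_1)=\sum_i\|S_i\|(\bB_1)$ and the area-minimality of each $S_i$. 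From $\Theta(S_i,0)\le\Theta(S,0)\le\bar Q/2$ I conclude $S_i\in\mathscr{C}(1,\bar Q,n)$. Setting $R_i:=\partial(S_i\res\bB_1)-\a{\ell\res\bB_1}$, a direct computation gives $R=\sum_{i=1}^Q R_i$.

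\smallskip
\noindent\textbf{Decomposing $Z$.} The support of $R$ is a finite collection of smooth embedded great circles $\gamma_k$ and half great circles $\eta_j$ (or the full circle $V\cap\partial\bB_1$ in case $(iiib)$), pairwise disjoint in $\partial\bB_1\setminus\{P_1,P_2\}$. For $\varepsilon$ small, the Hausdorff closeness forces $\spt(Z)$ to lie in a thin tubular neighborhood of $\spt(R)$ which splits, away from $P_1,P_2$, into pieces labelled by the components. I would then invoke the structure theorem for $1$-dimensional integer rectifiable currents (Federer, \S4.2.25) to write $Z$ as a locally finite sum of simple oriented arcs and loops with mass additivity. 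Combining this with the mass-excess bound $\|Z\|(\partial\bB_1)-\|R\|(\partial\bB_1)\le\varepsilon$ and the flat closeness, the atoms of $Z$ split into ``main atoms'', in bijection with the atoms of $R$ with correct multiplicities (precisely $Q_j^+$ main arcs shadow each $\eta_j$, $Q_k$ main loops shadow each $\gamma_k$, and in case $(iiib)$ additionally $Q^-$ main arcs shadow $\eta^-$ and $Q^-$ main loops shadow $V\cap\partial\bB_1$), plus a residual collection of small atoms with total mass $O(\varepsilon)$. I then assign to $Z_i$ (for $i=2,\ldots,Q$) one main atom shadowing the unique component of $R_i$, collect all remaining main atoms into $Z_1$ (their sum is close to $R_1$ by construction), and define $Z_{Q+1}$ as the sum of the residual atoms.

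\smallskip
\noindent\textbf{Verification and main obstacle.} Properties $(i)$--$(vi)$ follow immediately from this construction: $(i)$ and $(ii)$ from the atomic decomposition and its mass additivity; $(iii)$ because the main atoms carry exactly the boundary $\partial R_i$; and $(iv)$--$(vi)$ from the flat, mass and support closeness of each main atom to its counterpart in $R_i$ once $\varepsilon$ is sufficiently small. For $(vii)$, the main atoms already account for the full boundary $\partial Z=\partial R$, forcing $\partial Z_{Q+1}=0$, and the residual mass bound reduces to the mass-excess hypothesis. The delicate step, and the main obstacle, is making the pairing between main atoms of $Z$ and atoms of $R$ both quantitative and robust near $P_1,P_2$, where several half-circles meet. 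I would handle this by a compactness/contradiction argument: supposing no $\varepsilon$ worked, a sequence $Z_k$ satisfying the hypotheses with $\varepsilon\to 0$ would converge flatly to $R$ with mass convergence and Hausdorff-support convergence; slicing each $Z_k$ by the distance function to each component of $\spt(R)$ away from small disks around $P_1,P_2$ then yields a valid decomposition for $k$ large, with the negligible residue (controlled by the mass excess) absorbed into $Z_{Q+1}$, contradicting the hypothesis.
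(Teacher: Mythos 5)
Your overall strategy---Federer's decomposition of $Z$ into indecomposable arcs and loops, a compactness/contradiction argument to make the matching quantitative, and absorbing a low-mass residue into $Z_{Q+1}$---is the same as the paper's. But there is a genuine gap in the first step: you fix the cones $S_i$ (hence the cross-sections $R_i$) \emph{a priori} from the classification of $S$, and this choice can be incompatible with every admissible grouping of the atoms of $Z$. The problem occurs in case $(iiib)$, where $\spt(R)$ is a single great circle $\gamma=\eta^+\cup\eta^-$ and an indecomposable arc of $Z$ from $P$ to $N$ may \emph{wind} around $\gamma$: its limit is then $m^+\a{\eta^+}+m^-\a{\eta^-}$ with $m^+-m^-=1$ but $m^+>1$. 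Concretely, take $V$ a complex line in $\R^4$, $S=4\a{V^+}+2\a{V^-}$ (calibrated by the K\"ahler form, hence in $\mathscr{C}(2,6,2)$), and let $Z$ consist of two disjoint embedded arcs from $P$ to $N$, each winding one-and-a-half times around $\gamma$ inside an $\eps$-neighborhood, so that each converges to $2\a{\eta^+}+\a{\eta^-}$. This $Z$ satisfies all four hypotheses of Definition \ref{Def:EpiperimetricInequality}. Since there are no loops, each $Z_i$ ($i\le 2$) must consist of exactly one arc (to match $\partial Z_i=\partial R_i=\a{N}-\a{P}$), so with your choice $R_2=\a{\eta^+}$ you are forced into
\begin{equation*}
\mathcal{F}(Z_2-R_2)\;\approx\;\mathcal{F}\bigl(\a{\eta^+}+\a{\eta^-}\bigr)\;=\;\mathcal{F}(\a{\gamma})\;\ge\;c>0,
\end{equation*}
violating $(iv)$ for any $\eps_0<c$ no matter how small $\eps$ is. The correct splitting for this $Z$ is $S_1=S_2=2\a{V^+}+\a{V^-}$, which is not the one you prescribed.

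This is exactly why the lemma only demands $\|S\|(\bB_1)=\sum_i\|S_i\|(\bB_1)$ rather than a prescribed identity $S=\sum_i S_i$: the paper runs the compactness argument \emph{first} and then \emph{defines} $R_i$ as the limits $T_i$ of the indecomposable arcs of $Z^j$ (with $R_1:=T_1+T_\sharp$ collecting the limit of the loops), so the decomposition of $R$ is dictated by $Z$ rather than chosen in advance; mass additivity in the limit then makes each $S_i=\a{0}\ttimes R_i$ area-minimizing by the usual cut-and-paste. Your construction of the $Z_i$ would go through in case $(iiia)$ (there, mass additivity forces each limiting arc to be a single $\a{\eta_j}$ and each limiting cycle to live on the closed circles $\gamma_k$), but to repair the proof in case $(iiib)$ you must abandon the a priori choice of the $R_i$ and let them be produced by the limit, as in the paper. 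A secondary point: your claim that the residual atoms have total mass $O(\eps)$ also needs the lower-semicontinuity bookkeeping (equations of the type \eqref{e:mass_R_splits}) to rule out a loop of definite mass surviving in the limit without being assigned to some $R_i$; the paper handles this, including the delicate subcase where the limit of the loops has support meeting $\spt(R-T_\sharp)$, via Proposition \ref{p:classification}.
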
 

To pass from Lemma \ref{l:decomposition} to Proposition \ref{p:epiperimetric} we simply use the case $Q=1$ of Corollary \ref{c:epi-uniform} to each of the pieces $Z_i$ with $i\leq Q$ to find a suitable competitor $H_i$ with $\partial H_i = \partial Z_i + \a{\ell}\res \bB_1$, while we can use the isoperimetric inequality to find $H_{Q+1}$ such that $\partial H_{Q+1} = Z_{Q+1}$ and $\mathbf{M} (H) \leq C (\mathbf{M} (Z_{Q+1}))^2$. The verification that $H = \sum_i H_i$ gives the epiperimetric inequality for a suitable choice of the parameters involved is elementary and left to the reader. 

\begin{proof}
We argue by contradiction and assume that $\{Z^j\}$ is a sequence of $1$-dimensional integral currents in $\partial \bB_1$ and $S^j$ is a sequence of cones in $\mathscr{C} (Q, \bar Q, n)$ such that the assumptions in Definition \ref{Def:EpiperimetricInequality} hold true with a sequence of vanishing $\varepsilon_j$, but the decomposition claimed in the Lemma does not exist for every $j$. Since $\mathscr{C} (Q, \bar Q, n)$ is a compact set, we can assume, by extraction of a subsequence, that $S^j$ converges to some $S^\infty$ in $\mathscr{C} (Q, \bar Q, n)$. Observe that the only condition relating the pieces $S^j_i$ to $S^j$ is that $\|S^j\| (\bB_1) = \sum_i \|S^j_i\| (\bB_1)$. 
However, since $\|S^\infty\| (\bB_1), \|S\| (\bB_1) \in \{ \frac{k}{2} \pi: k \in \Z, k \leq \bar Q \}$ (this is true for all the cones in $\mathscr{C} (Q, \bar Q, n)$) and since $\|S^j\| (\bB_1) \to \|S^\infty\| (\bB_1)$, we can assume, without loss of generality, that the $\|S^j\| (\bB_1) = \|S\| (\bB_1)$. In particular we can substitute  $S^j$ with $S$ in the contradiction statement. Denoting by $R$ the cross section of $S$, we have therefore a sequence of currents which satisfies the following conditions
\begin{itemize}
\item[(a)] $\partial Z^j = \partial R$;
\item[(b)] $Z^j$ converges to $R$ in the flat norm;
\item[(c)] ${\rm spt}\, (Z^j)$ converges to ${\rm spt}\, (R)$ in the Hausdorff distance;
\item[(d)] $\limsup_j \|Z^j\| (\partial \bB_1) \leq \|R\| (\partial \bB_1)$.
\end{itemize}
The contradiction assumption is that for some $\varepsilon_0$, independent of $j$, there is no decomposition as claimed in the lemma. However, will construct one for $j$ large enough reaching the desired contradiction.

Observe that by (b) and the semicontinuity of the mass, (d) can actually be improved to
\begin{equation}\label{e:convergence_mass}
\lim_j \|Z^j\| (\partial \bB_1) = \|R\| (\partial \bB_1)\, .
\end{equation}
Let $P$ and $N$ be the two points in $\partial \bB_1$ such that $\partial R = Q \a{N} - Q \a{P}$.
Recall the decomposition theorem in \cite[4.2.25]{Federer}: each $Z^j$ can be decomposed in an (at most) countable sum of indecomposable $1$-dimensional integral currents $\sum_i T^j_i$ with the following properties 
\begin{align*}
Z^j &= \sum_i T^j_i\,, \\
\|Z^j\| &= \sum_i \|T^j_i\| \,, \\
\|\partial Z^j\| &= \sum_i \|\partial T^j_i\|\, .
\end{align*}
It turns out that, for each $T^j_i$, either $\partial T^j_i = 0$ or there is a positive integer $k\leq Q$ such that $\partial T^j_i = k \a{N}- k \a{P}$. \cite[4.2.5]{Federer} also implies that each $T^j_i$ is given by $(f^j_i)_\sharp \a{[0, \mathbf{M} (T^j_i)]}$ for some Lipschitz map $f_i^j: \mathbb R\to \partial \bB_1$, which is injective on $[0, \mathbf{M} (T^j_i))$. In particular, if $\partial T^j_i \neq 0$, then we must have $\partial T^j_i = \a{N} - \a{P}$. Upon reordering the $T^j_i$ we can thus assume $\partial T^j_i = \a{N} - \a{P}$ for all $1\leq i \leq Q$ and $\partial T^j_i= 0$ for all $i\geq Q+1$. We then set $T^j_\sharp := \sum_{i\geq Q+1} T^j_i$. 

So far we have
\[ Z^j = T_\sharp^j + T_1^j + \sum_{i=2}^Q T_i^j \]
with 
\[ \| Z^j \| = \| T_\sharp^j\| + \| T_1^j \| + \sum_{i=2}^Q \|T_i^j\| \,. \]
By the compactness of integral currents and \eqref{e:convergence_mass}, we can assume that a subsequence, not relabeled, satisfies $T^j_i \to T_i$ and $T^j_\sharp \to T_\sharp$ in the sense of currents, for some integral currents $T_i$ and $T_\sharp$. We claim that we can decompose $T^j_\sharp$ into $T^j_\flat + T^j_\natural$ with
\begin{itemize}
	\item $\partial T^j_\flat = \partial T^j_\natural = 0$; 
	\item the Hausdorff limit of ${\rm spt}\, (T^j_\flat)$ (which upon extraction of a subsequence we can assume to exist) is contained in ${\rm spt}\, (T_\sharp)$;
	\item $\mathbf{M} ( T^j_\natural) \to 0$. 
\end{itemize}
We will then set $Z^j_1 := T^j_1 + T^j_\flat$, $Z^j_{Q+1} := T^j_\natural$, and $Z_i^j := T_i^j$ for the remaining $j \in \{2, \dots, Q\}$. Clearly $(i)$, $(ii)$, $(iii)$, and $(vii)$ hold, it remains to show $(iv)$-$(vi)$.

We first show the decomposition of $T_\sharp^j$. Observe that we have two possibilities
\begin{itemize}
	\item[(I)] ${\rm spt}\, (T_\sharp)$ and ${\rm spt}\, (R-T_\sharp)$ are disjoint;
	\item[(II)] ${\rm spt}\, (T_\sharp)$ and ${\rm spt}\, (R-T_\sharp)$ are not disjoint. 
\end{itemize}
In the first case choose then $\eta>0$ so that the $\eta$-neighborhoods of the two pieces are disjoint. Since the supports of the $Z^j$ are converging to the support of $R$ in the sense of Hausdorff, it follows that, if $j$ is large enough, then the support of each current $T^j_i$ (which is a connected set) is either contained in the $\eta$-neighborhood of ${\rm spt}\, (T_\sharp)$ or it is contained in the $\eta$-neighborhood of  ${\rm spt}\, (R-T_\sharp)$. Then $T^j_\flat$ is defined as the sum of those $T^j_i$ with $i\geq Q+1$ whose supports are contained in the $\eta$-neighborhood of ${\rm spt}\, (T_\sharp)$.

In case (II), however, Proposition \ref{p:classification} implies that the support of the whole current $R$ is a single great circle $\gamma$, which equals the support of $T_\sharp$. But then the Hausdorff limit of ${\rm spt}\, (T^j_\sharp)$ is necessarily contained in $\gamma$.

Now we turn to $(iv)$-$(vi)$. Note that the compactness gives
\begin{align}\label{e:flat_convergence}
\mathcal{F} (T^j_i - T_i) + \mathcal{F} (T^j_\sharp - T_\sharp) \to 0
\end{align}
and thus 
\[
R =  T_\sharp + \sum_{i=1}^Q T_i \, .
\]
It follows in particular that $\partial T_i = \a{N} - \a{P}$, while $\partial T_\sharp = 0$, but also that
\[
\mathbf{M} (R) \leq \mathbf{M} (T_\sharp) +\sum_i \mathbf{M} (T_i) \, .
\]
Next note that, by semicontinuity of the mass and \eqref{e:convergence_mass}, we must have
\[
\mathbf{M} (T_\sharp) + \sum_i \mathbf{M} (T_i) \leq \liminf_{j\to \infty} \mathbf{M} (Z^j)\leq \mathbf{M} (R)\, .
\]
It thus follows that 
\begin{align}
\mathbf{M} (R) & = \mathbf{M} (T_\sharp) + \sum_i \mathbf{M} (T_i) \,, \label{e:mass_R_splits}\\
\mathbf{M} (T_i) &= \lim_j \mathbf{M} (T^j_i) \,, \label{e:mass_convergence_Ti} \\
\mathbf{M} (T_\sharp) &= \lim_j \mathbf{M} (T^j_\sharp)\, .
\end{align}
Notice that
\begin{align*}
	S= \a{0} \cone R = \underbrace{\a{0} \cone T_\sharp + \a{0} \cone T_1}_{=: S_1} + \sum_{i=2}^Q \underbrace{\a{0} \cone T_i}_{=:S_i} \,.
\end{align*}
Then we have by \eqref{e:mass_R_splits}
\[ \|S\| =  \sum_{i=1}^Q \|S_i \| \]
and as $S$ is area-minimizing, so are the $S_i$'s.
Each $S_i$ is then an element of $\mathscr{C} (1, \bar Q, n)$. 

Fix now an $i\geq 2$ and recall \eqref{e:flat_convergence} and \eqref{e:mass_convergence_Ti}. We claim $(iv)$-$(vii)$ hold true for $Z_i^j$ (with $j$ large enough) and $R_i := T_i$. Indeed, recall that each $T^j_i = (f^j_i)_\sharp \a{[0, \mathbf{M} (T^j_i)]}$ for a Lipschitz injective curve $f^j_i :[0, \mathbf{M} (T^j_i)] \to \partial \bB_1$. Assuming that $f^j_i$ is parametrized with constant speed over the interval $[0,1]$, we have $\sup_j \|\dot{f}^j_i\|_\infty < \infty$ and $f^j_i (0) = P$, $f^j_i (1) = N$. We can apply Ascoli-Arzel\'a and assume, without loss of generality, that $f^j_i$ converges to a Lipschitz  map $f_i : [0,1]\to \partial \bB_1$ and it thus follows that $R_i = T_i = (f_i)_\sharp \a{[0,1]}$. Observe that 
\[
\mathbf{M} (R_i) \leq \int_0^1 |\dot{f}_i| \leq \liminf_{j\to \infty} \int_0^1 |\dot{f}^j_i| \leq \liminf_j \mathbf{M} (T^j_i)\, .
\] 
Therefore, the chains of inequalities above are actually equalities, from which we infer that ${\rm spt}\,(R_i) = f_i ([0,1])$. In particular we have that 
\begin{align*}
{\rm dist}_{\cH} ({\rm spt}\, (T^j_i), {\rm spt}\, (R_i))
= {\rm dist}_{\cH} (f_i^j ([0,1])), f_i ([0,1]))
\to 0 \qquad \text{ for all } i\geq 2.
\end{align*}
This shows $(vi)$ for all $i\geq 2$ (recall that we set $Z^j_i := T^j_i$). 

The only choice for $R_1$ is then
\[ R_1 = R - \sum_{i=2}^Q R_i = T_\sharp + T_1 \,. \]
Observe that while we have 
\begin{align*}
\mathcal{F} (T^j_1 + T^j_\sharp - R_1) \to 0 
\qquad \text{ and } \qquad
\mathbf{M} (T^j_1 + T^j_\sharp) \rightarrow \mathbf{M} (R_1)\,,
\end{align*}
we cannot conclude that the supports of $T^j_1 + T^j_\sharp$ converge in the sense of Hausdorff to the support of $R_1$, because we do not know the representation by the Lipschitz map for $T_\sharp^j$. However, we can infer that 
\begin{align*}
{\rm dist}_{\cH} ({\rm spt}\, (T^j_1), {\rm spt}\, (T_1)) \to 0\,. 
\end{align*}
Thus, we achieve our goal, because we know that $T_\natural^j$ converges to zero and $T^j_\flat$ converges to $T_\sharp$, which in turn implies that ${\rm spt}\, (T_\sharp)$ must be necessarily contained in the Hausdorff limit of ${\rm spt}\, (T^j_\flat)$.
 
\end{proof}

\bibliographystyle{abbrv}
\bibliography{UniqTgConesBoundaryCSS1.bib}

\begin{thebibliography}{10}

\bibitem{Collection}
Some open problems in geometric measure theory and its applications suggested
  by participants of the 1984 {AMS} summer institute.
\newblock In J.~E. Brothers, editor, {\em Geometric measure theory and the
  calculus of variations ({A}rcata, {C}alif., 1984)}, volume~44 of {\em Proc.
  Sympos. Pure Math.}, pages 441--464. Amer. Math. Soc., Providence, RI, 1986.

\bibitem{AllB}
W.~K. Allard.
\newblock {On the first variation of a varifold: boundary behavior}.
\newblock {\em Ann. of Math. (2)}, 101:418--446, 1975.

\bibitem{Chang}
S.~X. Chang.
\newblock {Two-dimensional area minimizing integral currents are classical
  minimal surfaces}.
\newblock {\em J. Amer. Math. Soc.}, 1(4):699--778, 1988.

\bibitem{DDHM}
C.~{De Lellis}, G.~{De Philippis}, J.~Hirsch, and A.~Massaccesi.
\newblock On the boundary behavior of mass-minimizing integral currents, 2018.

\bibitem{DNS}
C.~De~Lellis, S.~Nardulli, and S.~Steinbr\"uchel.
\newblock {\em In preparation.}

\bibitem{DSS}
C.~De~Lellis, E.~Spadaro, and L.~Spolaor.
\newblock Uniqueness of tangent cones for two-dimensional almost-minimizing
  currents.
\newblock {\em Communications on Pure and Applied Mathematics},
  70(7):1402--1421, 2017.

\bibitem{Federer}
H.~Federer.
\newblock {\em Geometric Measure Theory}.
\newblock Grundlehren der mathematischen Wissenschaften. Springer Berlin
  Heidelberg, 1969.

\bibitem{HM}
J.~Hirsch and M.~Marini.
\newblock Uniqueness of tangent cones to boundary points of two-dimensional
  almost-minimizing currents.
\newblock {\em arXiv preprint arXiv:1909.13383}, 2019.

\bibitem{PuRi}
D.~Pumberger and T.~Rivi{\`e}re.
\newblock {Uniqueness of tangent cones for semicalibrated integral 2-cycles}.
\newblock {\em Duke Math. J.}, 152(3):441--480, 2010.

\bibitem{Simon}
L.~Simon et~al.
\newblock {\em Lectures on geometric measure theory}.
\newblock The Australian National University, Mathematical Sciences Institute,
  Centre for Mathematics and its Applications, 1983.

\bibitem{White}
B.~White et~al.
\newblock Tangent cones to two-dimensional area-minimizing integral currents
  are unique.
\newblock {\em Duke Mathematical Journal}, 50(1):143--160, 1983.

\end{thebibliography}

\end{document}